\newcommand{\R}{\mathbb{R}}
\newtheorem{lem}{Lemma}
\newtheorem{prop}{Proposition}
\theoremstyle{remark}
\newtheorem{rmk}{Remark}
\begin{document}
\thispagestyle{empty}
\setcounter{page}{1}
\setcounter{secnumdepth}{3}

\title{On the large time behaviour of the solution  of an SDE driven by a Poisson Point Process}
\author{Elma Nassar 
and Etienne Pardoux}
\maketitle

\begin{abstract}
    We study a stochastic differential equation driven by a Poisson
    point process, which models continuous changes in a population's
    environment, as well as the stochastic fixation of beneficial mutations 
    that might compensate for this change. The fixation probability of a 
    given mutation increases as the phenotypic lag $X_t$ between the population
    and the optimum grows larger, and successful mutations are 
    assumed to fix instantaneously (leading to an adaptive jump).
    Our main result is that the process is transient (i.e., continued
    adaptation is impossible) if the rate of environmental change $v$
    exceeds a parameter $m$, which can be interpreted as the rate of
    adaptation in case every beneficial mutation gets fixed with probability 
    $1$. If $v < m$, the process is positive recurrent, while in the limiting 
    case $m=v$, null recurrence or transience depends upon additional 
    technical conditions. We show how our results can be extended to the 
    case of a time varying rate of environmental change. 
    \end{abstract}

\section{Introduction}
We study the large time behaviour of the solution of a scalar stochastic differential equation of the type
\begin{equation}
\label{eq:eqdedepart}
X_t=X_0 - v(t) + \int_{[0,t]\times\R\times[0,1]}\alpha\varphi(X_{s^-},\alpha,\xi)M(ds,d\alpha,d\xi),
\end{equation}
where $M$ is a Poisson point process  on $\R_+\times\R\times[0,1]$ with mean measure
$ds\ \nu(d\alpha)\ d\xi$ and $\varphi(x,\alpha,\xi)={\bf1}_{\{\xi\le g(x,\alpha)\}}$. 
The goal of our work is to understand how a population can adapt to a deterioration of its fitness, due for instance to continuous change in the climatic conditions, thanks to mutations which improve its adaptation to the new environment. $ds\ \nu(d\alpha)$
represents the rate of appearance of new mutations, while $g(x,\alpha)$ is the probability that a mutation $\alpha$, which is proposed while the population's phenotypic lag is given by $x$, gets fixed. We assume that $g(x,\alpha)\rightarrow 1$ when $x\rightarrow \pm \infty$ provided that 
$x\alpha<0$.

We start with the simple case $v(t)=vt$, with $v>0$. 
With the notation $m=\int_0^\infty \alpha\nu(d\alpha)$, in other words
$m$ is the mean movement to the right per time unit produced by the positive mutations 
if all of them get fixed, our first result says that the Markov process $X_t$ is positive recurrent 
if $m>v$, transient if $m<v$, with a speed of escape to infinity equal to $v-m$. The most 
interesting case is the limit situation $m=v$. We show that, depending upon the speed at which 
$m(x)=\int_0^\infty \alpha g(x,\alpha)\nu(d\alpha)$ converges to $m$ as $x\to-\infty$, the process
 can be either null recurrent or else transient with zero speed.

We then generalize our results to the case where $v(t)$ is a more general (and even possibly random) function of time.

Note that \citet{Ker86}  has studied similar questions in discrete time. 
Similar resuts for a SDE driven by Brownian motion with coefficients that do not depend upon the time variable would be easy to obtain. Here we use stochastic calculus and several ad hoc Lyapounov functions. Note that
the It\^o formula for processes with jumps leads to less explicit computations than in the Brownian case. To circumvent this difficulty, for the treatment of the delicate case $m=v$, we establish a stochastic inequality for  $C^2$ functions whose 
second derivative is either increasing or decreasing, exploiting the fact that all jumps have the
 same sign, see Lemma \ref{ito-ineq} in subsection \ref{m=v} below.

The paper is organized as follows. We define our model in detail in section \ref{sec2}, refering to models already studied in the biological literature. We establish existence 
and uniqueness of a solution to our equation in section \ref{sec3} (the result is not immediate since we do not
 assume that the measure $\nu$ is finite). Section \ref{sec4} is devoted to the large time behaviour of $X_t$ when $v(t)=vt$, successively with $m<v$, $m>v$, and $m=v$. Finally section \ref{sec5} is devoted to the 
 large time behaviour of $X_t$ when $v(t)$ takes a more general form, but $\overline{v}=\lim\limits_{t\to\infty}t^{-1}\int_0^tv(s)ds$ exists.

\section{The Model}\label{sec2} 

Our starting point is the model by \citet{KopH09b} of a population of
constant size $N$ that is subject to Gaussian stabilizing selection, with a
moving optimum that increases linearly at rate $v$.
The population is assumed to be
monomorphic at all times (i.e., its state is completely characterized
by $x$). Mutations arise according to a Poisson point process with 
intensity $ds \nu(d\alpha)$. Note that, in the model considered in~\cite{KopH09b},
\begin{equation}\label{eq:density}
\nu(d\alpha)=\frac{\Theta}{2}p(\alpha)d\alpha,
\end{equation}
which translates as follows: 
mutations appear at rate $\Theta/2 = N\mu$ 
(where $\mu$ is the \emph{per-capita} mutation rate and $\Theta = 2N\mu$ is a standard
population-genetic parameter), and their phenotypic effects $\alpha$ are
drawn from a distribution with density $p(\alpha)$. Whereas in our model, we do not impose 
that $\nu$ has a density, nor that it is a finite measure.
We neglect the possibility of
fixation of deleterious mutations. Yet even beneficial mutations have
a significant probability of being lost due to the effects of genetic
drift. A mutation with effect $\alpha$ that arises in a
population with phenotypic lag $x$ has a probability of fixation $g(x,\alpha)$
that satisfies 
\begin{enumerate}
\item $0\leq g(x,\alpha)\leq {\bf 1}_{\{\alpha x<0\}}\times {\bf 1}_{\{|\alpha|\leq 2|x|\}}$,
\item For all $\alpha\in\R$,  $g(x,\alpha)\uparrow {\bf 1}_{\{\alpha x<0\}}$, as
 $|x|\rightarrow\infty$,
 \item For any compact set $K\subset\R$, there exists $c_K>0$ such that for all $x,y\in\R$
 \begin{equation}\label{eq:cond3}
 \int_K |\alpha|\times |g(x,\alpha)-g(y,\alpha)|\leq c_K |x-y|.
 \end{equation}
\end{enumerate}
One popular model among theoretical biologists for the fixation probability is
\begin{equation}
    \label{eq:pfix}
    g(x,\alpha) = 
    \begin{cases}
	1 - \exp(-2s(x,\alpha)) \quad\text{ if } s(x,\alpha) > 0, \\
	0 \text{ otherwise}
    \end{cases}
\end{equation}
where 
\begin{equation}
\label{eq:selcoef}
    s(x,\alpha) = - \sigma[|\alpha|(2|x|-|\alpha|)]^+ \times \mathbf 1_{\{x\alpha<0\}}
 \approx\frac{\mathcal W(x+\alpha)}{\mathcal W(x)} - 1
    \end{equation}
is the selection coefficient; $ \mathcal W(x) = e^{-\sigma x^2}$ being the 
fitness of the population when the phenotypic lag is $x$ and $\sigma$ 
determining the strength of selection.
The expression of $g$ defined in \eqref{eq:pfix} and \eqref{eq:selcoef} satisfies the three conditions listed above
(see Lemma~\ref{lemcond3} for the proof of~\eqref{eq:cond3}), and is a good approximation of the fixation 
probability derived under a diffusion approximation \citep{Mal52, Kim62}, 
which is valid when the population size $N$ is large enough. 
Note that \citet{KopH09b} used the even simpler approximation
$g(x,\alpha) \approx 2s(x,\alpha)$ (\citealp{Hal27}; for more exact approximations
for the fixation probability in
changing environments, see \citealp{UecH11, PeiK12}). Once a mutation
gets fixed, it is assumed to do so instantaneously, and the phenotypic
lag $x$ of the population is updated accordingly. 

In the particular case $v(t)=vt$, the evolution of the phenotypic lag of the population
can be described by the following particular case of equation~\eqref{eq:eqdedepart}:
\begin{equation} 
\label{eq:Xt}
X_{t}=X_{0}-vt+\int_{[0,t]\times\mathbb{R}\times [0,1]} \alpha
\varphi(X_{s^{-}}, \alpha,\xi)M(ds,d\alpha,d\xi).
\end{equation} 
Here, $M$ is a Poisson point process
over $\mathbb{R_{+}}\times\mathbb{R}\times [0,1]$ with intensity 
$ds\ \nu(d\alpha)\ d\xi$.
$\nu(d\alpha)$ is a $\sigma$-finite measure on $\mathbb R$ describing the
distribution of new mutations up to a multiplicative constant, 
which satisfies
\begin{equation} \label{eq:condition}
\int_{\mathbb{R}}|\alpha|\wedge 1\nu(d\alpha)<\infty,
\end{equation}
and
\begin{equation*} 
\varphi(x,\alpha,\xi)=\mathbf{1}_{\{\xi\leq g(x,\alpha)\}},
\end{equation*}
where the fixation probability $g(x,\alpha)$ has been defined above.
The points of this Poisson point process $(T_{i},A_{i}, \Xi_{i})$ are such that the
$(T_{i}, A_{i})$ form a Poisson point process over $\mathbb R_+\times\mathbb{R}$  of the
proposed mutations with intensity $ds\nu(d\alpha)$, and the
$\Xi_{i}$ are i.i.d.\ $\mathcal{U}[0,1]$, globally independent of the
Poisson point process of the $(T_{i}, A_{i})$. $T_{i}$'s are the times when mutations
are proposed and $A_{i}$'s are the effect sizes of those mutations. 
The $\Xi_i$ are auxiliary variables determining fixation:
a mutation gets instantaneously fixed if $\Xi_{i}\leq g(X_{T_{i}},A_{i})$,
and is lost otherwise. 
%

\section{Existence and uniqueness}\label{sec3}
Define for all $x$
\begin{align} 
m(x)&=\int_{\mathbb{R}}\alpha g(x,\alpha)\nu(d\alpha),\notag\\
m&=\int_{\mathbb{R}_{+}}\alpha\nu(d\alpha), \label{eq:m}\\  
\psi(x)&=m(x)-v,\label{eq:psi}\\  
V(x)&=\int_{\mathbb{R}}\alpha^{2}g(x,\alpha)\nu(d\alpha),\notag\\
V&=\int_{\mathbb{R}_{+}}\alpha^{2}\nu(d\alpha).\label{eq:V}
\end{align}
$m(x)$ is the mean speed towards zero induced by the fixation of
random mutations while $X_{t}=x<0$. $V(x)$ is related to the second
moment of the distribution of these mutations. $m$ and $V$ are the
limits of $m(x)$ and $V(x)$, respectively, in the case that all
mutations with  $\alpha > 0$ go to fixation (as we will show later,
this is the case if $x\rightarrow-\infty$). Note that our assumptions
do not exclude cases where $m=\infty$ and/or $V=\infty$, unless
stated otherwise. However, since $g(x,\cdot)$ has compact support, for
each $x$, $m(x)<\infty$ and $V(x)<\infty$. The cases $m=\infty$ and
$V=\infty$ correspond to a heavy tailed $\nu$. It would be quite
acceptable on biological grounds to assume that $m<\infty$ and/or
$V<\infty$. However, we refrain from adding unnecessary assumptions.
We rewrite the SDE~\eqref{eq:Xt} as follows
\begin{equation}\label{eq:Xt1}
X_{t}=X_{0}+\int_{0}^{t}\psi(X_{s})ds+\mathcal M_{t}
\end{equation}
where the martingale
\begin{equation}
    \label{eq:martingale}
    \mathcal{M}_{t}=\int_{0}^{t}\int_{\mathbb{R}_{+}}\int_{0}^{1}\alpha\varphi(X_{s^{-}},
    \alpha, \xi) \bar{M}(ds, d\alpha, d\xi),
\end{equation}
with $\bar{M}(ds, d\alpha, \xi)$ being the compensated Poisson measure
$M(ds,d\alpha,d\xi)-ds\nu(d\alpha)d\xi$.
\bigskip
\begin{prop}
Equation~\eqref{eq:Xt1} has a unique solution.
\end{prop}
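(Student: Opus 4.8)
The plan is to establish existence and uniqueness of the solution to the SDE by a standard Picard-type iteration, but carried out carefully because the driving measure $\nu$ is only $\sigma$-finite and satisfies merely the integrability condition~\eqref{eq:condition}. The key structural fact I would exploit at the outset is that, because $g(x,\alpha)$ is supported on $\{|\alpha|\le 2|x|\}$ by condition~(1), for a trajectory confined to a bounded region only mutations of bounded effect size can ever be accepted. Concretely, I would first \emph{localize}: fix $R>0$ and define a stopping time $\tau_R$ as the first exit of $X$ from $[-R,R]$, and observe that up to $\tau_R$ the only relevant part of the jump measure is the restriction of $M$ to $\{|\alpha|\le 2R\}$. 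On this restricted set the drift $\psi$ is Lipschitz (this is exactly what the Lipschitz estimate~\eqref{eq:cond3} in condition~(3) buys us, applied with $K=[-2R,2R]$), and the jump intensity $\int_{\{|\alpha|\le 2R\}}|\alpha|\,\nu(d\alpha)$ is finite by~\eqref{eq:condition}.

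The main work is then a fixed-point argument on a bounded time interval. I would define the Picard map on the space of càdlàg adapted processes by
\begin{equation*}
(\Phi X)_t = X_0 + \int_0^t \psi(X_s)\,ds + \int_{[0,t]\times\mathbb{R}\times[0,1]} \alpha\,\varphi(X_{s^-},\alpha,\xi)\,\bar M(ds,d\alpha,d\xi),
\end{equation*}
and show it is a contraction in a suitable norm. The natural choice is to control $\mathbb{E}\bigl[\sup_{s\le t}|X_s^{(1)} - X_s^{(2)}|\bigr]$ for two inputs. The drift term is handled by the Lipschitz bound on $\psi$; for the stochastic integral term I would not use the $L^2$ isometry directly (since $V$ may be infinite), but rather estimate the difference of the two jump integrals pathwise. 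The crucial observation is that $|\varphi(x,\alpha,\xi)-\varphi(y,\alpha,\xi)| = \mathbf{1}_{\{g(y,\alpha)\wedge g(x,\alpha) < \xi \le g(x,\alpha)\vee g(y,\alpha)\}}$, so after integrating out $\xi$ the difference of the two compensators is bounded by $\int_{\mathbb{R}}|\alpha|\,|g(X_s^{(1)},\alpha)-g(X_s^{(2)},\alpha)|\,\nu(d\alpha)$, which by~\eqref{eq:cond3} (on the localizing set) is $\le c_K|X_s^{(1)}-X_s^{(2)}|$. Combining the drift and jump estimates with Gronwall's lemma gives uniqueness on $[0,T]$ up to $\tau_R$, and standard contraction-iteration yields a solution.

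Finally I would remove the localization. Since the process only moves by jumps of size $|\alpha|\le 2|X_{s^-}|$ to the left-or-right toward the optimum together with the deterministic downward drift $-vt$, one must check that the constructed local solutions can be patched into a single global solution with $\tau_R \uparrow \infty$ almost surely, i.e.\ no explosion in finite time. A rough a priori bound on $\mathbb{E}|X_t|$ (using that $|\alpha|\wedge 1$ is $\nu$-integrable and the drift grows at most linearly) suffices to show the exit times diverge. I expect the main obstacle to be precisely this interplay between the $\sigma$-finiteness of $\nu$ and the unboundedness of jump sizes: one cannot simply appeal to the classical Lipschitz SDE-with-jumps theorem because the jump coefficient $\alpha\varphi(x,\alpha,\xi)$ is neither globally Lipschitz nor square-integrable against $\nu$. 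The compact-support property of $g(x,\cdot)$ together with the tailored Lipschitz condition~\eqref{eq:cond3} is exactly the device that tames this, and organizing the localization so that these two facts can be invoked together is the delicate point.
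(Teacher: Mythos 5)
Your proposal is correct in substance, and the core quantitative step is the same as the paper's: integrate out $\xi$ so that the difference of the jump terms is controlled by $\int|\alpha|\,|g(x,\alpha)-g(y,\alpha)|\,\nu(d\alpha)\le c_K|x-y|$, i.e.\ condition~\eqref{eq:cond3}, and then close a fixed-point/Gronwall argument. Where you genuinely diverge from the paper is in how you tame the $\sigma$-finiteness of $\nu$. You localize in the \emph{state variable}: stop at the exit time $\tau_R$ from $[-R,R]$, so that the support condition $g(x,\alpha)=0$ for $|\alpha|>2|x|$ caps the relevant jump sizes at $2R$, where $\int_{\{|\alpha|\le 2R\}}|\alpha|\,\nu(d\alpha)<\infty$ by~\eqref{eq:condition}. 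The paper instead localizes in the \emph{jump-size variable}: since $\nu(\{|\alpha|>k\})<\infty$, the driving point process has only finitely many proposed mutations with $|\alpha|>k$ on any bounded time interval, so it suffices to solve the equation between two consecutive such times, i.e.\ under the extra assumption $\nu([-k,k]^c)=0$; the solution is then concatenated across these finitely many large proposals. The paper's route buys you freedom from any non-explosion argument, and it runs the contraction in the weighted norm $\mathbb{E}\int_0^Te^{-\lambda t}|Z_t|\,dt$ rather than your $\mathbb{E}\sup_{s\le t}|\cdot|$ (your pathwise, monotone-in-$t$ bound on the uncompensated jump integral does make the sup-norm workable without any $L^2$ isometry, as you note). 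Your route requires two extra items of care: (i) to avoid circularity, the Picard map should be set up for the equation driven by $M$ restricted to $\{|\alpha|\le 2R\}$ (which is globally well posed), with the resulting solutions then identified with the true solution up to $\tau_R$ and patched together; and (ii) $\tau_R\uparrow\infty$ a.s. Your appeal to a bound on $\mathbb{E}|X_t|$ only gives divergence in probability; but in fact the argument is easier and pathwise: an accepted jump satisfies $|\alpha|\le 2|X_{s^-}|$ and $\alpha X_{s^-}<0$, so $|X_s|\le|X_{s^-}|$ at every jump, whence $|X_t|\le|X_0|+vt$ deterministically and $\tau_R\ge(R-|X_0|)/v$. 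With those two points made explicit, your argument is a valid alternative to the paper's.
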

\begin{proof}
If $\nu$ is a finite measure, then $M$ has a.s.\ finitely many points
in $[0,t]\times\mathbb{R}$ for any $t>0$. In that case, the unique
solution is constructed explicitly by adding the successive jumps. In
the general case, we choose an arbitrary compact set $K=[-k,k]$ (with
$k>0$). There are finitely many jumps $(t_{i},\alpha_{i})$ of $M$ with
$\alpha_{i}\notin K$. It suffices to prove existence and uniqueness
between two such consecutive jumps. In other words , it suffices to
prove existence and uniqueness under the assumption $\nu(K^{c})=0$,
and hence from~\eqref{eq:condition}, we deduce that $\int
\left(|\alpha|+\alpha^2\right)\nu(d\alpha)<\infty$, which we assume from now on. Define for
each $t>0$
\begin{equation}\label{eq:pointfixe}
\Gamma_{t}(U)=x-vt+\int_{[0,t]\times K \times
[0,1]}\alpha\varphi(U_{s^{-}},\alpha,\xi)M(ds,d\alpha,d\xi).
\end{equation}
A solution of equation (\ref{eq:Xt1}) is a fixed point of the mapping $\Gamma$. Hence it suffices to prove that $\Gamma$ admits a unique fixed point. For $\lambda>0$,
{\footnotesize
\begin{align*}
\mathbb{E} e^{-\lambda t}\left|\Gamma_{t}(U)-\Gamma_{t}(V)\right|
=&-\lambda\mathbb{E}\int_{0}^{t}e^{-\lambda s}\left|\Gamma_{s}(U)-\Gamma_{s}(V)\right|ds +\mathbb{E}\int_{0}^{t}e^{-\lambda s}d\left|\Gamma_{s}(U)-\Gamma_{s}(V)\right|\\
\leq &-\lambda\mathbb{E}\int_{0}^{t}e^{-\lambda s}\left|\Gamma_{s}(U)-\Gamma_{s}(V)\right|ds\\
&+\mathbb{E}\int_{[0,t]\times K\times [0,1]}|\alpha|e^{-\lambda s}\left|\varphi(U_{s^{-}},\alpha,\xi)-\varphi(V_{s^{-}},\alpha,\xi)\right|M(ds,d\alpha,d\xi).
\end{align*}
}
The above inequality follows readily from the fact that, for all $0<s<t$, 
\begin{equation*}
\begin{split}
\left|\Gamma_{t}(U)-\Gamma_{t}(V)\right|&-\left|\Gamma_{s}(U)-\Gamma_{s}(V)\right|\\
&\leq \int_{(s,t)\times K\times
[0,1]}|\alpha|\times|\varphi(U_{r^{-}},\alpha,\xi)-\varphi(V_{r^{-}},\alpha,\xi)|M(dr,d\alpha,d\xi).
\end{split}
\end{equation*}
Thus,
{\footnotesize
\begin{equation}
\begin{split}
\label{eq:fctlambda}
\lambda \mathbb{E}\int_{0}^{t}e^{-\lambda s}\left|\Gamma_{s}(U)-\Gamma_{s}(V)\right|ds&\leq\mathbb{E}\int_0^t\int_K |\alpha | e^{-\lambda t}\left|g(U_{s},\alpha)-g(V_{s},\alpha)\right|\nu(d\alpha)ds\\
&\leq c_K \mathbb E\int_0^t e^{-\lambda s}|U_s-V_s|ds.\\
\end{split}
\end{equation}
}
The last inequality is due to the assumption~\eqref{eq:cond3}. Let $T$ be arbitrary. 
Define for all $\lambda>0$ the norm on the Banach space
$L^{1}(\Omega\times [0,T])$, 
\begin{equation*}
\|Z\|_{T,\lambda}=\mathbb{E}\int_{0}^{T}e^{-\lambda t}|Z_{t}|dt.
\end{equation*}
We choose $\lambda_{0}>c_K$. We deduce from~\eqref{eq:fctlambda} that 
\begin{equation*}
\mathbb{E}\|\Gamma(U)-\Gamma(V)\|_{T,\lambda_{0}}\leq \frac{c_K}{\lambda_{0}}
\mathbb{E}\|U-V\|_{T,\lambda_{0}}.
\end{equation*}
Since $c_K/\lambda_{0}<1$, $\Gamma$ has a unique fixed point such that $\Gamma_{t}(U)=U_{t}$ for all $0\leq t\leq T$. Since $T$ is arbitrary, the result is proved.
\end{proof}
We now prove that $g$ given by~\eqref{eq:pfix} and~\eqref{eq:selcoef} satisfies the assumption~\eqref{eq:cond3}.
\bigskip
\begin{lem}
\label{lemcond3}
For any compact set $K\subset\R$ and for all $u,v\in\mathbb R$,
\begin{equation*}
\int_K |\alpha(g(u,\alpha)-g(v,\alpha))|\nu(d\alpha)\leq c_K |u-v|,
\end{equation*}
where $c_K=4\sigma\left(\int_K\alpha^{2}\nu(d\alpha)\right)$.
\end{lem}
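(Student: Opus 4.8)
The plan is to prove a pointwise (in $\alpha$) Lipschitz estimate in the state variable and then integrate. Concretely, I would first establish that for every fixed $\alpha\in\mathbb R$ and all $u,v\in\mathbb R$,
\[
|g(u,\alpha)-g(v,\alpha)|\le 4\sigma|\alpha|\,|u-v|,
\]
i.e.\ that $g(\cdot,\alpha)$ is globally Lipschitz with constant $4\sigma|\alpha|$. Granting this, the lemma is immediate: multiplying by $|\alpha|$ gives $|\alpha|\,|g(u,\alpha)-g(v,\alpha)|\le 4\sigma\alpha^2|u-v|$, and integrating over $\alpha\in K$ against $\nu$ produces exactly $c_K|u-v|$ with $c_K=4\sigma\int_K\alpha^2\,\nu(d\alpha)$. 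So the whole content is the pointwise bound, and the integration step is routine.

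To obtain the pointwise bound I would write $g$ as a composition $g(x,\alpha)=\phi\bigl(h(x,\alpha)\bigr)$, where
\[
h(x,\alpha)=\bigl[|\alpha|\,(2|x|-|\alpha|)\bigr]^{+}\,\mathbf 1_{\{x\alpha<0\}}\ge 0,
\qquad
\phi(u)=1-e^{-2\sigma u}\quad(u\ge 0),
\]
which is just a rewriting of \eqref{eq:pfix}--\eqref{eq:selcoef}. The outer function is harmless: $\phi'(u)=2\sigma e^{-2\sigma u}\in(0,2\sigma]$ on $[0,\infty)$, so $\phi$ is $2\sigma$-Lipschitz there, and it then suffices to show that $x\mapsto h(x,\alpha)$ is $2|\alpha|$-Lipschitz, since $2\sigma\cdot 2|\alpha|=4\sigma|\alpha|$. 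The composition then yields $|g(u,\alpha)-g(v,\alpha)|\le 2\sigma\,|h(u,\alpha)-h(v,\alpha)|\le 4\sigma|\alpha|\,|u-v|$.

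The Lipschitz estimate on $h$ is the only real point, and it is where the interplay of the indicator $\mathbf 1_{\{x\alpha<0\}}$ and the positive part must be handled with care. Using the symmetry $(x,\alpha)\mapsto(-x,-\alpha)$, under which both $h$ and the target bound are invariant, I would reduce to $\alpha>0$; then $x\alpha<0$ means $x<0$, and on $\{x<0\}$ the bracket equals $\alpha(-2x-\alpha)$, which is positive precisely when $x<-\alpha/2$. The apparent danger is the jump of the indicator at $x=0$, but at $x=0$ the bracket equals $-\alpha^{2}\le 0$, so the positive part already vanishes there. Hence $h(\cdot,\alpha)\equiv 0$ on $\{x\ge -\alpha/2\}$ (in particular in a full neighbourhood of $x=0$), the discontinuity of the indicator is masked, and on $\{x<-\alpha/2\}$ one has $h(x,\alpha)=\alpha(-2x-\alpha)$, affine in $x$ with slope $-2\alpha$. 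Thus $h(\cdot,\alpha)$ is continuous and piecewise affine with one-sided slopes in $\{0,-2\alpha\}$, hence $2\alpha=2|\alpha|$-Lipschitz. The main obstacle is therefore purely the bookkeeping that verifies $h$ has no jump at the sign change $x=0$; once this masking is observed, the slope bound, the composition with $\phi$, and the final integration over $K$ all follow directly.
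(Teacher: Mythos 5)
Your proof is correct, and it reaches the stated constant $c_K=4\sigma\int_K\alpha^2\nu(d\alpha)$ by a genuinely different route than the paper. The paper argues directly on the integral: it splits into three cases according to the positions of $u$, $v$ relative to $0$ (namely $0<u<v$, $v<u<0$, $u<0<v$), decomposes the $\alpha$-domain into the region where both $g(u,\cdot)$ and $g(v,\cdot)$ are active and the region where only one is, and bounds each piece with the elementary inequalities $|e^{-a}-e^{-b}|\le|a-b|$ and $1-e^{-a}\le a$ for $a,b\ge0$. You instead prove the single pointwise estimate $|g(u,\alpha)-g(v,\alpha)|\le 4\sigma|\alpha|\,|u-v|$ via the factorization $g=\phi\circ h$ with $\phi(u)=1-e^{-2\sigma u}$ ($2\sigma$-Lipschitz on $[0,\infty)$, since $0<\phi'\le2\sigma$) and $h(\cdot,\alpha)$ continuous piecewise affine with one-sided slopes in $\{0,-2\alpha\}$; your key observation that the positive part vanishes on all of $\{x\ge-\alpha/2\}$, so the jump of the indicator $\mathbf 1_{\{x\alpha<0\}}$ at $x=0$ is masked, is exactly what makes $h(\cdot,\alpha)$ globally $2|\alpha|$-Lipschitz with no case distinctions on $u,v$. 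Your version buys a cleaner, uniform argument (one Lipschitz bound, then integrate) and in fact a slightly stronger, pointwise-in-$\alpha$ conclusion; the paper's version stays closer to the explicit exponential formulas at the cost of threefold casework. Note also that you have implicitly (and correctly) read the selection coefficient in \eqref{eq:selcoef} as $s(x,\alpha)=\sigma[|\alpha|(2|x|-|\alpha|)]^+\mathbf 1_{\{x\alpha<0\}}$; as literally printed, with the leading minus sign, $s\le0$ and $g$ would vanish identically, and the paper's own proof uses the same corrected reading.
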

\begin{proof}
For $0<u<v$ we have that
\begin{equation*}
\begin{split}
\int_K |\alpha(g(u,\alpha)-g(v,\alpha))|\nu(d\alpha)=
\ &\int_{\mathbb{R_{-}}\bigcap K}|\alpha\left(e^{-2\sigma|\alpha|(2|v|-|\alpha|)^{+}}-e^{-2\sigma|\alpha|(2|u|-|\alpha|)^{+}}\right)|\nu(d\alpha)\\
= &\int_{[-2u,0]\bigcap K}|\alpha\left(e^{-2\sigma|\alpha|(2|v|-|\alpha|)}-e^{-2\sigma|\alpha|(2|u|-|\alpha|)}\right)|\nu(d\alpha)\\
&+\int_{[-2v,-2u]\bigcap K}|\alpha|\left| e^{-2\sigma|\alpha|(2|v|-|\alpha|))}-1\right|\nu(d\alpha)\\
\leq\ &4\sigma\left(\int_{[-2u,0]\bigcap K}\alpha^{2}\nu(d\alpha)\right)\times|u-v|\\
&+2\sigma \int_{[-2v,-2u]\bigcap K}\alpha^{2}(2v+\alpha)\nu(d\alpha)\\
\leq\ &4\sigma\left(\int_K\alpha^{2}\nu(d\alpha)\right)\times|u-v|.
\end{split}
\end{equation*}
A similar estimate can easily be obtained for $v<u<0$.
For $u<0<v$ , we have that
{\footnotesize
\begin{equation*}
\begin{split}
\int_{\mathbb{R}}|\alpha (g(u,\alpha)-g(v,\alpha))|\nu(d\alpha)
\leq\ &\int_{\mathbb{R_{-}}\bigcap K}|\alpha g(v,\alpha)|\nu(d\alpha)
+ \int_{\mathbb{R_{+}}\bigcap K}|\alpha g(u,\alpha)|\nu(d\alpha)\\
\leq\ &2\sigma\int_{\mathbb{R}_{-}\bigcap K}\alpha^{2}(2v+\alpha)^{+}\nu(d\alpha)
+ 2\sigma\int_{\mathbb{R}_{+}\bigcap K}\alpha^{2}(-2u-\alpha)^{+}\nu(d\alpha)\\
\leq\ &4\sigma\int_{\mathbb{R}_{-}\bigcap K}\alpha^{2}|v|\nu(d\alpha)+ 4\sigma\int_{\mathbb{R}_{+}\bigcap K}\alpha^{2}|u|\nu(d\alpha)\\
\leq\ &4\sigma\left(\int_K\alpha^{2}\nu(d\alpha)\right)\times |u-v|.\\
\end{split}
\end{equation*}
}
\end{proof}

\section{Classification of the large-time behaviour}\label{sec4}
\begin{prop}
If $X_{0}>0$, then $X_{t}$ becomes negative after a finite time a.s.
\end{prop}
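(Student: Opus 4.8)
The plan is to exploit the fact that, as long as $X_t$ stays positive, the process can only decrease, so that the deterministic drift alone drives it below zero in bounded time. First I would record the crucial sign observation. The first assumption on $g$ gives $g(x,\alpha)=0$ whenever $\alpha x\ge 0$; in particular, if $X_{s^-}=x>0$ then fixation (that is, $\varphi(x,\alpha,\xi)=1$, i.e.\ $\xi\le g(x,\alpha)$) is possible only when $\alpha<0$. Hence every jump of $X$ occurring while the process is strictly positive is downward, and the jump integral in \eqref{eq:Xt} contributes a nonpositive amount over any time interval on which $X$ remains positive.

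Next I would introduce the hitting time $\tau=\inf\{t\ge 0:\ X_t\le 0\}$ and show directly that $\tau\le X_0/v$. For every $t<\tau$ one has $X_s>0$ for all $s\le t$, so by the previous step the sum of the jumps over $[0,t]$ is nonpositive (and finite, since \eqref{eq:Xt1} admits a genuine solution). From \eqref{eq:Xt} this yields $X_t\le X_0-vt$. Combining with $X_t>0$ gives $X_0-vt>0$, that is $t<X_0/v$. As this holds for every $t<\tau$, we conclude $\tau\le X_0/v<\infty$ --- a bound which is in fact deterministic.

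Finally I would upgrade $X_\tau\le 0$ to strict negativity. If $X_\tau<0$ there is nothing to do. If $X_\tau=0$, I would observe that at the level $x=0$ the support bound forces $g(0,\cdot)\equiv 0$, so no mutation can fix there; hence immediately after $\tau$ the process moves only through the drift, whose slope $-v<0$ pushes it strictly below $0$. Thus $X$ becomes negative no later than time $X_0/v$, almost surely.

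The argument is essentially elementary and presents no serious analytic obstacle. The only point that genuinely requires care is the sign bookkeeping on the stochastic interval $[0,\tau)$: one must check that $X_{s^-}>0$ there (not merely $X_s>0$), so that the support property of $g$ really does rule out positive jumps, and one must make sure that the possibly infinitely many small jumps sum to a well-defined nonpositive quantity, which is guaranteed by the existence result established above.
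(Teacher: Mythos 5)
Your argument is correct and is essentially the same as the paper's: both use the support condition $g(x,\alpha)\le \mathbf 1_{\{\alpha x<0\}}$ to conclude that all jumps occurring while $X>0$ are downward, hence $X_t\le X_0-vt$ up to the first passage below $0$, which forces that passage to occur by the deterministic time $X_0/v$. The extra care you take about $X_{s^-}$ versus $X_s$ and about the boundary case $X_\tau=0$ is sound but not needed beyond what the paper already does with $X_{t\wedge T_0^-}$.
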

\begin{proof}[Proof]
Let $T_{0}=\inf(t>0, X_{t}<0)$. Since $g(x,\alpha)=0$ for $x\alpha>0$,
\begin{equation*}
\begin{split}
X_{t\wedge T_{0}^{-}}&=X_{0}-v(t\wedge T_{0})+\int_{0}^{t\wedge
T_{0}^{-}}\int_\mathbb{R}\int_{0}^{1} \alpha \varphi(X_{s^{-}},
\alpha,\xi)M(ds,d\alpha,d\xi)\\
&\leq X_{0}-v(t\wedge T_{0}),
\end{split}
\end{equation*}
hence
\[  t\wedge T_{0}\leq \frac{X_{0}-X_{t\wedge T_{0}^{-}}}{v}< \frac{X_{0}}{v}.\]

Let $t$ tend to $\infty$.
\begin{equation*}
T_{0}\leq \frac{X_{0}}{v}<\infty
\end{equation*}
\end{proof}

	
Whether the process $X_{t}$ is positive recurrent, nul recurrent or
transient depends only upon its behavior while $X_{t}<0$. Hence, we only need to
consider in detail the case when $X_{t}$ is negative, in which case only positive
 mutations ($\alpha>0$) have a positive probability of fixation.
\bigskip
\begin{prop}
The functions $x\longmapsto m(x)$ and $x\longmapsto V(x)$ are continuous and
 decreasing on $\mathbb{R_{-}}$ and 
\begin{equation} \label{eq:prop2} 
\begin{split}
m(x)&\xrightarrow[x\rightarrow -\infty]{} m,\\
V(x)&\xrightarrow[x\rightarrow -\infty]{} V.\\
\end{split}
\end{equation}
\end{prop}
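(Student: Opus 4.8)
The plan is to restrict to $x<0$, where only the positive effects $\alpha>0$ contribute, and then to split the statement into a monotone-convergence part (monotonicity and the two limits) and a Lipschitz part (continuity). First I would note that for $x<0$ the first listed property forces $g(x,\alpha)=0$ whenever $\alpha\le 0$, since then $\alpha x\ge 0$; hence
\[
m(x)=\int_0^\infty \alpha\, g(x,\alpha)\,\nu(d\alpha),\qquad V(x)=\int_0^\infty \alpha^2\, g(x,\alpha)\,\nu(d\alpha),
\]
and the $x$-dependence enters only through $g(x,\cdot)$.

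For the monotonicity and the limits I would use the second listed property in its monotone form: for each fixed $\alpha>0$ the map $x\mapsto g(x,\alpha)$ is nondecreasing in $|x|$, hence nonincreasing on $\mathbb{R}_-$, with $g(x,\alpha)\uparrow 1$ as $x\to-\infty$. Monotonicity of the integral then shows that both $m$ and $V$ are decreasing on $\mathbb{R}_-$, while the monotone convergence theorem gives $m(x)\uparrow m$ and $V(x)\uparrow V$, with $m$ and $V$ as in \eqref{eq:m} and \eqref{eq:V}. I would stress that this argument does not require $m$ or $V$ to be finite, which matches the standing assumption that heavy-tailed $\nu$ (with $m=\infty$ or $V=\infty$) are allowed.

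For continuity I would argue locally. Fix $L>0$ and take $x,y\in[-L,0)$. By the first listed property both $g(x,\cdot)$ and $g(y,\cdot)$ vanish outside $[0,2L]$, so with $K=[0,2L]$ the difference $g(x,\cdot)-g(y,\cdot)$ is supported in $K$. The Lipschitz bound \eqref{eq:cond3} then yields directly
\[
|m(x)-m(y)|\le \int_K |\alpha|\,|g(x,\alpha)-g(y,\alpha)|\,\nu(d\alpha)\le c_K\,|x-y|,
\]
so $m$ is locally Lipschitz, hence continuous, on $\mathbb{R}_-$.

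The only step requiring a small additional idea is the continuity of $V$, because \eqref{eq:cond3} carries the weight $|\alpha|$ while $V$ carries $\alpha^2$. I would absorb the extra power using the compact support: on $K=[0,2L]$ one has $\alpha^2\le 2L\,|\alpha|$, so
\[
|V(x)-V(y)|\le \int_K \alpha^2\,|g(x,\alpha)-g(y,\alpha)|\,\nu(d\alpha)\le 2L\int_K |\alpha|\,|g(x,\alpha)-g(y,\alpha)|\,\nu(d\alpha)\le 2L\,c_K\,|x-y|,
\]
which again gives local Lipschitz continuity. This weight mismatch between \eqref{eq:cond3} and $V$ is the main (and rather minor) obstacle; everything else is monotone convergence together with monotonicity of the integral.
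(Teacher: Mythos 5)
Your proof is correct and follows essentially the same route as the paper: monotonicity of $x\mapsto g(x,\alpha)$ for fixed $\alpha>0$ gives the decrease of $m$ and $V$ on $\mathbb{R}_-$, and the monotone convergence theorem gives the limits, without any finiteness assumption on $m$ or $V$. The paper dismisses continuity with ``proved similarly,'' whereas you make it explicit via the Lipschitz condition \eqref{eq:cond3} together with the compact support of $g(x,\cdot)$ (absorbing the extra factor $\alpha\le 2L$ for $V$), which is a clean and correct way to fill in that omitted detail.
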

\begin{proof}[Proof]
We prove this result for the function $x\longmapsto m(x)$. A similar 
argument applies to $V(x)$. Let
\begin{align*}
h: \mathbb{R}_{-}\times\mathbb{R}_{+} &\to \mathbb{R}_{+} \\
(x,\alpha)  &\mapsto h(x,\alpha)=\alpha g(x,\alpha).\\
\end{align*}
We have that 
$h(x,\cdot)\in L^{1}(\nu)$, and $x\mapsto h(x,\alpha)$ is decreasing.
For each fixed $\alpha>0$, $0\leq h(x,\alpha)\uparrow
\alpha$, as $x\rightarrow -\infty$.
By the monotone convergence theorem, it follows that 
\begin{equation*}
m(x)=\int_{\mathbb{R}}h(x,\alpha)\nu(d\alpha)\xrightarrow[x\rightarrow -\infty]{} m.
\end{equation*}
Continuity is proved similarly. 
\end{proof}

To determine the large-time behavior of the process, we now consider
successively, the three cases $v > m$, $v < m$ and $v=m$.

\subsection{The case $v>m$}
In particular, here $m=\int_{0}^{\infty}\alpha\nu(d\alpha)$ is finite. 
Let 
\begin{equation*}
\mathcal{N}_{t}=\int_{[0,t]\times\mathbb{R}\times [0,1]} \alpha
\varphi(X_{s^{-}}, \alpha,\xi)M(ds,d\alpha,d\xi)
\end{equation*}
be the sum of all the jumps on the time interval $[0,t]$. We have that
\begin{equation*}
\mathcal{N}_{t}= \mathcal{N}_{t}^{(+)}+\mathcal{N}_{t}^{(-)}\leq \mathcal{N}_{t}^{(-)},
\end{equation*} 
where 
\begin{equation*}
\begin{split}
\mathcal{N}_{t}^{(+)}&=\mathbf{1}_{\{X_{s^{-}}>0\}}d\mathcal{N}_{s}\\
\mathcal{N}_{t}^{(-)}&=\mathbf{1}_{\{X_{s^{-}}<0\}}d\mathcal{N}_{s}\\
\end{split}
\end{equation*}
Let $m^{(-)}(x)=\mathbf{1}_{\{x<0\}}m(x)$, hence
\begin{equation*}
\mathcal{M}_{t}^{(-)}=\mathcal N^{(-)}_t - \int_{0}^{t}m^{(-)}(X_s)ds.
\end{equation*}
Thus,
\begin{equation*}
X_{t}\leq X_{0}+\int_{0}^{t}(m^{-}(X_{s})-v)ds+\mathcal{M}_{t}^{(-)}
\end{equation*}
\bigskip
\begin{lem}
If $m<\infty$, then
\begin{equation} \label{eq:martconv}
\lim_{t\rightarrow\infty}\frac{\mathcal M^{(-)}_{t}}{t}=0.
\end{equation}
\end{lem}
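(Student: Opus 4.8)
The plan is to establish the almost-sure convergence $\mathcal M^{(-)}_t/t\to0$ through a strong law of large numbers for martingales. Since $\mathcal M^{(-)}$ is the compensated Poisson integral in \eqref{eq:martingale} restricted to $\{X_{s^-}<0\}$, squaring the integrand (using $\varphi^2=\varphi$) and integrating out $\xi$ gives the predictable quadratic variation
\[
\langle\mathcal M^{(-)}\rangle_t=\int_0^t\mathbf 1_{\{X_s<0\}}\int_{\mathbb R_+}\alpha^2 g(X_s,\alpha)\,\nu(d\alpha)\,ds=\int_0^t\mathbf 1_{\{X_s<0\}}V(X_s)\,ds .
\]
If one knew $V<\infty$, the argument would close at once: one would then have $\langle\mathcal M^{(-)}\rangle_t\le Vt$, and the SLLN for locally square-integrable martingales would give $\mathcal M^{(-)}_t/\langle\mathcal M^{(-)}\rangle_t\to0$ on $\{\langle\mathcal M^{(-)}\rangle_\infty=\infty\}$ (and convergence of $\mathcal M^{(-)}_t$ on the complement), whence $\mathcal M^{(-)}_t/t\to0$. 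The real difficulty, and the main obstacle, is that we only assume $m<\infty$: the limit $V$ may be infinite, so $\mathcal M^{(-)}$ need not be square-integrable and this direct route fails. I would circumvent it by truncating the jump sizes at a level $R$.

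Fix $R>0$ and split $\mathcal M^{(-)}=\mathcal M^{(-),R}+\mathcal M^{(-),>R}$ according to whether the effect size $\alpha$ lies in $(0,R]$ or in $(R,\infty)$ (only $\alpha>0$ contributes while $X_s<0$, since $g(x,\alpha)=0$ for $x\alpha>0$). For the bounded part, condition \eqref{eq:condition} together with $m<\infty$ yields $\int_{(0,R]}\alpha^2\,\nu(d\alpha)<\infty$ — indeed $\int_{(0,1]}\alpha^2\,\nu\le\int_{(0,1]}|\alpha|\,\nu<\infty$ and $\int_{(1,R]}\alpha^2\,\nu\le R\int_{(1,R]}\alpha\,\nu\le Rm$ — so that $\langle\mathcal M^{(-),R}\rangle_t\le t\int_{(0,R]}\alpha^2\,\nu(d\alpha)=:c_Rt$ with $c_R<\infty$. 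The SLLN for square-integrable martingales then gives $\mathcal M^{(-),R}_t/t\to0$ a.s.: on $\{\langle\mathcal M^{(-),R}\rangle_\infty<\infty\}$ the martingale converges, while on the complementary event $\mathcal M^{(-),R}_t/\langle\mathcal M^{(-),R}\rangle_t\to0$ and $\langle\mathcal M^{(-),R}\rangle_t/t\le c_R$ stays bounded, so the product tends to $0$.

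For the large-jump part, the proposals with $\alpha>R$ arrive according to a Poisson process of finite intensity $\nu((R,\infty))<\infty$, so $\mathcal N^{(-),>R}_t$, the sum of fixed jumps larger than $R$, is dominated by the compound Poisson process $S^R_t:=\sum_{T_i\le t,\,A_i>R}A_i$, whose mean increment $\int_{(R,\infty)}\alpha\,\nu(d\alpha)=:\varepsilon_R$ is finite since $m<\infty$; the ordinary SLLN for compound Poisson processes gives $S^R_t/t\to\varepsilon_R$ a.s. The compensator of $\mathcal N^{(-),>R}$ is bounded by $\varepsilon_R t$, and, both it and $\mathcal N^{(-),>R}_t$ being nonnegative, $|\mathcal M^{(-),>R}_t|\le\max(\mathcal N^{(-),>R}_t,\,\varepsilon_R t)$, whence $\limsup_t|\mathcal M^{(-),>R}_t|/t\le\varepsilon_R$ a.s. Combining the two pieces yields $\limsup_t|\mathcal M^{(-)}_t|/t\le\varepsilon_R$ for every $R$. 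Finally, since $m<\infty$, monotone convergence gives $\varepsilon_R\downarrow0$ as $R\to\infty$, and letting $R\to\infty$ along a sequence proves $\mathcal M^{(-)}_t/t\to0$ a.s. The only delicate point is the one already flagged — the passage around a possibly infinite $V$ — which the truncation plus compound-Poisson domination resolves; the remaining steps are standard invocations of the martingale and compound-Poisson strong laws.
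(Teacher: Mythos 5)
Your proof is correct, but it follows a genuinely different route from the paper's. You truncate in the jump-size variable: the part of $\mathcal M^{(-)}$ built from jumps $\alpha\le R$ is a locally square-integrable martingale with $\langle\mathcal M^{(-),R}\rangle_t\le c_Rt$, $c_R<\infty$ (your verification that \eqref{eq:condition} and $m<\infty$ give $\int_{(0,R]}\alpha^2\nu(d\alpha)<\infty$ is right), so the martingale strong law kills it; the part built from jumps $\alpha>R$ is dominated by a compound Poisson process of finite rate $\nu((R,\infty))$ and finite mean drift $\varepsilon_R=\int_{(R,\infty)}\alpha\,\nu(d\alpha)$, giving $\limsup_t|\mathcal M^{(-),>R}_t|/t\le\varepsilon_R$, and $\varepsilon_R\downarrow0$ because $m<\infty$. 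The paper instead truncates in time: it discretizes to unit intervals, sets $Y_i=\xi_i-\omega_i$ with the domination $0\le\xi_i\le\eta_i$ by the i.i.d.\ integrable increments $\eta_i$ of the uncensored mutation flux, truncates at level $i$ ($\tilde Y_i=Y_i\mathbf 1_{\{\eta_i\le i\}}$), and reruns the classical Kolmogorov-truncation proof of the SLLN (Breiman's Corollary 3.22 with Doob's inequality replacing Kolmogorov's, plus the $\sum_i\mathbb E\tilde Y_i^2/i^2<\infty$ computation from Theorem 3.30). The two arguments attack the same obstacle --- possibly infinite $V$ --- by truncating different quantities. Yours is more modular and stays entirely within continuous-time martingale/compound-Poisson language, at the price of invoking the strong law for locally square-integrable martingales (including a.s.\ convergence on $\{\langle M\rangle_\infty<\infty\}$, which is legitimate here since the jumps of $\mathcal M^{(-),R}$ are bounded by $R$); the paper's is more elementary in its toolbox but requires adapting a textbook i.i.d.\ argument to the non-i.i.d.\ increments $Y_i$ via the domination by $\eta_i$. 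Both are complete proofs of the lemma.
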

\begin{proof}[Proof]
$\mathcal M^{(-)}_{t}$ is a square-integrable martingale, 
such that $\mathbb{E}{\mathcal M^{(-)}_{t}}=0$.
Define $M^{0}$ as the Poisson random measure for new mutations
i.e.\ a Poisson point process on $\mathbb{R}_{+}\times\mathbb{R}$
with intensity $ds\nu(d\alpha)$.  
For all $i \in \mathbb{N}^{*}$ and $n\in \mathbb{N}^{*}$, define
\begin{equation*}
\begin{split}
\xi_{i} &= \int_{i-1}^{i}\int_{0}^{\infty}\int_{0}^{1}
\alpha\varphi(X_{s^{-}},\alpha,\xi)M(ds,d\alpha,d\xi),\\
\omega_{i} &= \int_{i-1}^{i}m^{-}(X_{s})ds,\\
\eta_{i} &= \int_{i-1}^{i}\int_{0}^{\infty} \alpha M^{0}(ds,d\alpha),\\
Y_{i} &= \xi_{i}-\omega_{i},\\
\mathcal{M}_{n}&=\sum_{i=1}^{n}Y_{i},\\
\end{split}
\end{equation*}
Note that for all $i \in \mathbb{N}^{*}$, 
$0\leq \xi_{i}\leq \eta_{i}$ and $0\leq \omega_{i}\leq m$. 
We first establish
\bigskip
\begin{lem}
\begin{equation*}
\mbox{If
}\quad\frac{\sum_{i=1}^{n}Y_{i}}{n}\xrightarrow[n]{}0,\quad\mbox{
then}  \quad\frac{\mathcal M^{-}_{t}}{t}\xrightarrow[t\rightarrow\infty]{}0.
\end{equation*}
\end{lem}
\begin{proof}[Proof]
\begin{equation*}
\frac{\mathcal M^{-}_{t}}{t}=\frac{\mathcal M^{-}_{\lfloor t \rfloor}}{\lfloor t \rfloor}\times
\frac{\lfloor t \rfloor}{t}+\frac{\tilde{\mathcal M^{-}}_{t}}{t},
\end{equation*}
where
\begin{equation*}
\begin{split}
\frac{\tilde{\mathcal M}^{-}_{t}}{t}=&\frac{1}{t}\left(\int_{\lfloor t
\rfloor}^{t}\int\int \alpha \varphi(X_{s^{-}},\alpha,\xi)M(ds,d\alpha,d\xi)-\int_{\lfloor t \rfloor}^{t}m^{-}(X_{s})ds\right)\\
&\leq \frac{1}{t}\left(\int_{\lfloor t \rfloor}^{\lceil t
\rceil}\int\int \alpha \varphi(X_{s^{-}},\alpha,\xi)M(ds,d\alpha,d\xi)+\int_{\lfloor t \rfloor}^{\lceil t \rceil}m^{-}(X_{s})ds\right)\\
&=\frac{1}{t}\left(\xi_{\lceil t \rceil}+\omega_{\lceil t
\rceil}\right)=\frac{\lceil t \rceil}{t}\times \frac{1}{\lceil t
\rceil}\left(Y_{\lceil t \rceil }+2\omega_{\lceil t
\rceil}\right)\xrightarrow[t\rightarrow \infty]{}0,\\
\end{split}
\end{equation*}
since for all $n>0$,
\begin{equation*}
\frac{Y_{n+1}}{n+1}=\frac{\sum_{i=1}^{n+1}Y_{i}}{n+1}-\frac{\sum_{i=1}^{n}Y_{i}}{n}\times\frac{n}{n+1}\xrightarrow[n\rightarrow\infty]{}0
\end{equation*}
and
\begin{equation*} 
0\leq \frac{|\omega_{n}|}{n}\leq \frac{m}{n},
\end{equation*}
hence 
\begin{equation*}
\frac{\omega_{n}}{n}\xrightarrow[n\rightarrow \infty]{}0.
\end{equation*}
\end{proof}
Back to the proof of Lemma 1. We now define 
\begin{equation*}
\begin{split}
A_{i} &=\{\eta_{i}>i\},\\
\tilde{Y}_{i} &=Y_{i}\mathbf{1}_{\{\eta_{i}\leq i\}}.\\
\end{split}
\end{equation*}
Since the $\left(\eta_{i},i\in \mathbb{N}^{*}\right)$ are i.i.d, integrable and
\begin{equation*}
\mathbb{P}(\eta_{i}>i)=\sum_{i\geq 1}\mathbb{P}(\eta_{1}>i)\leq \mathbb{E}\eta_{1}<\infty,
\end{equation*}
it follows from Borel Cantelli's Lemma that $\mathbb{P}(\limsup
A_{i})=0$. Hence, a.s.\ there exists $N(\alpha)$ such that for all
$n>N(\alpha)$, we have
$\tilde{Y}_{n}=Y_{n}$. But since
$\mathbb{E}(\tilde{Y}_{n})\rightarrow \mathbb{E}(Y_{1})$ due to the
dominated convergence theorem, it is sufficient to prove that
\begin{equation*}
\frac{\sum_{i=1}^{n}\left(\tilde{Y}_{i}-\mathbb{E}(\tilde{Y}_{i})\right)}{n}\xrightarrow[n]{}0.
\end{equation*}
Due to corollary 3.22 in~\cite{Bre68}\footnote{In the proof
of this theorem, we replace Kolmogorov's inegality by Doob's
inegality for martingales, and the result holds in our case.}, it is
again sufficient to prove that
\begin{equation*}
\sum_{i=1}^{\infty}\frac{\mathbb{E}(\tilde{Y_{i}}^{2})}{i^{2}}<\infty.
\end{equation*}
Indeed, we have that
\begin{equation*}
\sum_{i=1}^{\infty}\frac{\mathbb{E}(\tilde{Y_{i}}^{2})}{i^{2}}<2m.
\end{equation*}
The underlying calculation can be found in the proof of
theorem 3.30 in~\cite{Bre68}.
\end{proof}
\bigskip
\begin{rmk}
\label{Mplus}
In the case $m<\infty$ and $X_{t}\rightarrow -\infty$, we have that 
\begin{equation*}
\frac{1}{t}\mathcal{M}_{t}^{(+)}\rightarrow 0,
\end{equation*}
since eventually $X_{t}$ becomes negative. Furthermore,
if we assume that $\int_{-\infty}^{0}\alpha \nu(d\alpha)>-\infty$ then the 
previous Lemma implies that 
\begin{equation*}
\frac{\mathcal{M}_{t}}{t}\rightarrow 0,
\end{equation*}
whether $X_{t}\rightarrow -\infty$ or not. But we refrain from adding 
any supplementary assumption on $\nu$.
\end{rmk}
\bigskip
\begin{prop}
In the case $v>m$, $X_{t}\rightarrow - \infty$ with speed $v-m$ in the sense
that
\begin{equation*}
\frac{X_t}{t}\xrightarrow[]{a.s. }m-v \text{ as } t\rightarrow\infty.
\end{equation*}
\end{prop}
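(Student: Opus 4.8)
The plan is to exploit the exact decomposition obtained by splitting the jumps according to the sign of $X_{s^-}$. Writing $\mathcal N_t^{(+)}$ for the sum of the jumps made while $X_{s^-}>0$ and using $\mathcal M_t^{(-)}=\mathcal N_t^{(-)}-\int_0^t m^{(-)}(X_s)\,ds$, equation~\eqref{eq:Xt} rearranges into
\begin{equation*}
X_t=X_0+\int_0^t\bigl(m^{(-)}(X_s)-v\bigr)\,ds+\mathcal M_t^{(-)}+\mathcal N_t^{(+)} .
\end{equation*}
First I would extract a one-sided bound. Every jump occurring while $X_{s^-}>0$ has $\alpha<0$, so $\mathcal N_t^{(+)}\le 0$, and $m^{(-)}(x)\le m$ for all $x$; hence $X_t\le X_0+(m-v)t+\mathcal M_t^{(-)}$. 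Dividing by $t$ and using~\eqref{eq:martconv} to kill the martingale term gives $\limsup_{t\to\infty}X_t/t\le m-v$. Since $v>m$ this upper bound is strictly negative, which already forces $X_t\to-\infty$ almost surely.

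The second stage turns this $\limsup$ into a true limit by feeding the transience back into the same decomposition. Because $X_t\to-\infty$ is a genuine limit, almost surely $X_s<0$ for all large $s$, so the process performs only finitely many excursions into $(0,\infty)$; consequently $\mathcal N_t^{(+)}$ is eventually constant and $\mathcal N_t^{(+)}/t\to 0$ (as in Remark~\ref{Mplus}). On the event that $X_s<0$ one has $m^{(-)}(X_s)=m(X_s)$, and by~\eqref{eq:prop2} $m(X_s)\to m$ as $X_s\to-\infty$; since $0\le m^{(-)}(X_s)\le m<\infty$, the Ces\`aro average $t^{-1}\int_0^t m^{(-)}(X_s)\,ds$ then converges to $m$. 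Combining $X_0/t\to 0$, $\mathcal M_t^{(-)}/t\to 0$, $\mathcal N_t^{(+)}/t\to 0$ and this last limit in the decomposition yields $X_t/t\to m-v$, which is the assertion.

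I expect the genuine analytic difficulty — the law of large numbers $\mathcal M_t^{(-)}/t\to 0$ for the compensated jump martingale — to be already dispatched by~\eqref{eq:martconv}, so the remaining obstacle is chiefly one of sequencing. The two terms $\mathcal N_t^{(+)}$ and $t^{-1}\int_0^t m^{(-)}(X_s)\,ds$ cannot be evaluated before transience is known, so the argument has to be staged: one must first obtain the bound $\limsup X_t/t\le m-v$ from the martingale estimate alone, and only then use the resulting $X_t\to-\infty$ to pin down the exact speed. The single delicate point is to argue that $X_t\to-\infty$ as a limit (rather than merely $\liminf X_t=-\infty$) genuinely precludes infinitely many returns to the positive half-line, which is what licenses discarding $\mathcal N_t^{(+)}$.
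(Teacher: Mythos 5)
Your proposal is correct and follows essentially the same route as the paper: the one-sided bound $X_t\le X_0+(m-v)t+\mathcal M_t^{(-)}$ together with~\eqref{eq:martconv} gives $\limsup X_t/t\le m-v<0$ and hence transience, after which the convergence $m(X_s)\to m$ from~\eqref{eq:prop2} (the paper phrases your Ces\`aro step as an $\epsilon$--$t_\epsilon$ argument, and disposes of $\mathcal N_t^{(+)}$ exactly as in Remark~\ref{Mplus}) yields the matching lower bound.
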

\begin{proof}[Proof]
\begin{equation*}
\frac{X_{t}}{t}=\frac{X_{0}}{t}-v+\frac{1}{t}\int_{0}^{t}m(X_{s})ds+\frac{\mathcal M_{t}}{t}\leq
\frac{X_{0}}{t}-v+m+\frac{\mathcal M_{t}}{t}.
\end{equation*}
Hence
\begin{equation}\label{eq:eq0}
\limsup_{t\rightarrow \infty}\frac{X_{t}}{t}\leq-v+m.
\end{equation}
On the other hand, it follows from~\eqref{eq:prop2} that 
\begin{equation} \label{eq:eq1}
\forall \epsilon>0\quad \exists K_{\epsilon}>0 \mbox{ such that }
x\leq -K_{\epsilon}\Rightarrow m(x)>m-\epsilon,
\end{equation} 
and since  $X_{t}\xrightarrow[t\rightarrow\infty]{}-\infty$
by~\eqref{eq:eq0}, we have
\begin{equation} \label{eq:eq2}
\forall \epsilon>0\quad \exists t_{\epsilon}>0 \mbox{ such that } \forall s\geq
t_{\epsilon}\Rightarrow X_{s}\leq -K_\epsilon. 
\end{equation}
Statements~\eqref{eq:eq1} and~\eqref{eq:eq2} combined give
\begin{equation*}
\forall \epsilon >0\quad \exists t_{\epsilon} \mbox{ such that }
\forall s\geq t_{\epsilon} \Rightarrow m(X_{s})>m-\epsilon. 
\end{equation*}
Then, $\forall \epsilon>0$ and $t>t_\epsilon$
\begin{equation*}
\begin{split}
\frac{X_{t}}{t}&\geq\frac{X_{t_{\epsilon}}}{t}+\frac{1}{t}\int_{t_{\epsilon}}^{t}(m(X_{s})-v)ds+\frac{\mathcal
M_{t}-\mathcal M_{t_{\epsilon}}}{t}\\
&\geq\frac{X_{t_{\epsilon}}}{t}+(m-\epsilon-v)\times\frac{t-t_\epsilon}{t}+\frac{\mathcal
M_{t}-\mathcal M_{t_{\epsilon}}}{t}.
\end{split}
\end{equation*}
Hence,
\begin{equation*}
\liminf_{t\rightarrow\infty}\frac{X_{t}}{t}\geq -v+m
\end{equation*}

We conclude that $X_{t}\rightarrow-\infty$ with speed $v-m$.
\end{proof}

\subsection{The case $v<m$}
The assumption is satisfied, in particular, when $m=\infty$.
\bigskip
\begin{prop}\label{reccase}
In the case $v<m$, $X_{t}$ is positive recurrent. 
\end{prop}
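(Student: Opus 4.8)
The plan is to prove positive recurrence by exhibiting a Foster--Lyapunov drift condition for the generator of $X$, using a norm-like function that decreases in mean outside a compact neighbourhood of the origin. Writing the generator of the Markov process \eqref{eq:Xt1} as
\[
\mathcal L f(x)=-v f'(x)+\int_{\mathbb R}\bigl(f(x+\alpha)-f(x)\bigr)g(x,\alpha)\,\nu(d\alpha),
\]
I would take the Lyapunov function $W(x)=|x|$ (replacing it by the smooth surrogate $\sqrt{1+x^2}$ near the origin if one wants to stay in the classical domain of $\mathcal L$). The decisive structural input is that all fixing mutations have sign opposite to $x$ and satisfy $|\alpha|\le 2|x|$ (condition~1 on $g$): for $x<0$ and $0<\alpha\le 2|x|$ one has $x+\alpha\in(-|x|,|x|]$, and for $x>0$ and $-2|x|\le\alpha<0$ one has $x+\alpha\in[-|x|,|x|)$, so in both cases $|x+\alpha|\le|x|$, i.e.\ \emph{every} admissible jump decreases $W$.

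Carrying out the computation on the negative axis and splitting the jump integral at $\alpha=|x|$ gives, for $x<0$,
\[
\mathcal L W(x)=v-\int_{(0,|x|]}\alpha\,g(x,\alpha)\,\nu(d\alpha)-\int_{(|x|,2|x|]}(2|x|-\alpha)\,g(x,\alpha)\,\nu(d\alpha)\le v-\int_{(0,|x|]}\alpha\,g(x,\alpha)\,\nu(d\alpha),
\]
while for $x>0$ the sign remark above yields $\mathcal L W(x)\le -v$. By monotone convergence and \eqref{eq:prop2} (i.e.\ $g(x,\alpha)\uparrow\mathbf 1_{\{\alpha>0\}}$ and $m(x)\to m$ as $x\to-\infty$), the truncated integral $\int_{(0,|x|]}\alpha\,g(x,\alpha)\,\nu(d\alpha)$ increases to $m$; hence $\limsup_{x\to-\infty}\mathcal L W(x)\le v-m<0$. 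Consequently, fixing any $c$ with $0<c<\min(v,m-v)$, there exist $K>0$ and $b<\infty$ such that
\[
\mathcal L W(x)\le -c+b\,\mathbf 1_{[-K,0]}(x),\qquad x\in\mathbb R,
\]
boundedness of $\mathcal L W$ on the compact set $[-K,0]$ following from continuity of $m(x)$ and $V(x)$.

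From here the conclusion is standard. Applying Dynkin's formula to $W$ and the hitting time $\tau$ of $C:=[-K,0]$ gives $c\,\mathbb E_x[\tau]\le W(x)<\infty$ for every starting point $x\notin C$, so $C$ is reached in finite mean time from everywhere; together with the already established fact that $X$ started positive enters $(-\infty,0]$ in finite time, this yields recurrence. Upgrading this to genuine positive (Harris) recurrence --- existence of a unique invariant probability measure to which the law of $X_t$ converges --- is where the main work lies: one must show that $C$ is a petite (small) set and that the process is irreducible, which is delicate here because the dynamics are purely drift-plus-jump with no diffusive component, so reachability of a neighbourhood has to be read off from the jump measure $\nu$ and the deterministic downward drift. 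The remaining technical points are comparatively minor: the non-differentiability of $|x|$ at $0$ is absorbed either by the smooth surrogate $\sqrt{1+x^2}$ (for which the same estimates hold up to lower-order terms) or by the fact that the continuous part of $X$ has finite variation, and the case $m=\infty$ only makes $\mathcal L W$ \emph{more} negative at $-\infty$, so the argument is unaffected.
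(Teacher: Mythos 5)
Your drift computation is correct and, on the half-line $(-\infty,-K)$, it is the paper's own first step in Foster--Lyapunov clothing: with $W(x)=|x|=-x$ for $x<-K<0$ the identity $\mathcal LW(x)\le v-\int_{(0,|x|]}\alpha g(x,\alpha)\nu(d\alpha)$ is exactly the supermartingale bound the paper extracts from $X_{t\wedge T_K}=X_0+\int_0^{t\wedge T_K}\psi(X_s)ds+\mathcal M_{t\wedge T_K}$ together with the monotonicity of $m(\cdot)$ (the paper picks $K$ so that $\psi(x)>m(-K)-v>0$ below $-K$ and gets $\mathbb E T_K\le -X_0/(m(-K)-v)$, which is your Dynkin estimate $c\,\mathbb E_x[\tau]\le W(x)$). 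Your observation that $m=\infty$ only helps is also the paper's.

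The genuine gap is the part you explicitly set aside as ``where the main work lies.'' Finite expected hitting time of $C=[-K,0]$ is not the conclusion: the paper's notion of positive recurrence is a finite expected return time to $[0,+\infty)$, and the whole second half of its proof is devoted to crossing from $[-K,0)$ up to $[0,+\infty)$. You gesture at the Meyn--Tweedie route (show $C$ is petite and the chain irreducible), correctly note that this is delicate for a pure drift-plus-jump process, and then stop --- so the argument as written proves recurrence of a compact set but not positive recurrence. The paper closes this with an elementary renewal argument that avoids petiteness altogether: let $p>0$ be a uniform lower bound, over starting points $x\in[-K,0)$, for the probability of hitting $[0,+\infty)$ within a fixed time $T$; run independent trials of length $T$, and whenever a trial fails and leaves the process below $-K$ (necessarily above $-(K+vT)$), reuse the first-stage estimate to return above $-K$ in expected time at most $(K+vT)/(m(-K)-v)$. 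A geometric number of trials then gives $\mathbb E T_0\le \mathbb E T_K+\frac1p\bigl(T+\frac{K+vT}{m(-K)-v}\bigr)$. To complete your proof you would need either this argument (including a justification that $p>0$, which is where the reachability issue you worry about actually lives) or a verified minorization condition for $C$; neither is present.
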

\begin{proof}[Proof]
Since $x\longmapsto m(x)$ is continuous, decreasing from 
$\mathbb{R}_{-}$ to $\mathbb{R}_{+}$ and $m(0)=0<v<m$, $\exists$
$N>0$ such that $m(-N)=v$. We choose an arbitrary $K>N$, so that for all $x<-K$
\begin{equation*}
\psi(x)>m(-K)-v>0.
\end{equation*} 

Assume that $X_{0}<-K$, and define the stopping time
\begin{equation*}
T_{K}=\inf\{t>0, X_{t}\geq -K\}
\end{equation*}
We have
\begin{equation*}
\begin{split}
X_{t\wedge T_{K}}&=X_{0}+\int_{0}^{t\wedge
T_{K}}\psi(X_{s})ds+\int_{[0,t\wedge T_{K}]\times \mathbb{R}_{+}\times
[0,1]}\alpha\varphi(X_{s^{-}}, \alpha, \xi)\bar{M}(ds, d\alpha, d\xi)\\
&>X_{0}+(m(-K)-v)(t\wedge T_{K})+\mathcal M_{t\wedge T_{K}}.
\end{split}
\end{equation*} 
Thus,
\begin{equation*}
0>\mathbb{E}(X_{t\wedge T_{K}})>X_{0}+(m(-K)-v)\mathbb{E}(t\wedge
T_{K}).
\end{equation*}
Now let $t$ tend to $\infty$. It follows by monotone convergence that
\begin{equation} \label{eq:ineq}
\mathbb{E}(T_{K})<\frac{-X_{0}}{m(-K)-v}<\infty.
\end{equation}
Given any fixed $T>0$, let $p$ denote the lower bound of the
probability that, starting from any given point $x\in[-K,0)$ at time
$t_{0}$, $X$ hits $[0,+\infty)$ before time $T$. Clearly $p>0$. We now define a geometric random variable $\beta$ with success probability $p$. Let us restart our process $X$ 
at time $t_{0}=T_{K}$ from $x_{0}\in[-K,0)$. If
$X$ hits zero before time $T$, then $\beta=1$. If not, we look at the
position $X_{T}$ of $X$ at time $T$. Two cases are possible:
\begin{itemize}
\item If $X_{T}<-K$, we wait until $X$ goes back above $-K$. Since
$X_{T}\geq -(K+vT)$, the time $\alpha_{2}$ needed to do so
satisfies 
\begin{equation*}
\mathbb{E}(\alpha_{2})\leq \frac{K+vT}{m(-K)-v}.
\end{equation*}
This calculation is similar to~\eqref{eq:ineq}.
\item If $X_{T}\geq -K$, we start afresh from there, since the
probability to reach zero in less than $T$ is greater than or equal to
$p$. 
\end{itemize}
So either at time $T$ or at time $T+\alpha_{2}$, we start
again from a level which is above $-K$. If $[0,+\infty)$ is reached
during the next time interval of length $T$, then $\beta=2$. If not,
we repeat the procedure. A.s.\ one of the mutually independent 
trials is successful. We have that  
\begin{equation*}
T_{0}<T_{K}+\sum_{i=1}^{\beta}\left(T+ \alpha_{i}\right),
\end{equation*}
where the random variables $\left(\alpha_{i}\right)_{i}$ are i.i.d, globally independent of $\beta$. Hence 
\begin{equation*}
\mathbb{E}T_{0}<\mathbb{E}T_{K}+\frac{1}{p}\left(T+\frac{K+vT}{m(-K)-v})\right),
\end{equation*}
and the process is positive recurrent.
\end{proof}

\subsection{The case $v=m$}\label{m=v}
We first state a lemma that we will apply several times in this
section. 
\bigskip
\begin{lem}\label{ito-ineq}
Let $X_{t}$ be a FV c\`adl\`ag process.
\begin{enumerate}
\item  If $\Phi\in C^{1}$, then
\begin{equation*}
\Phi(X_{t})=\Phi(X_{0})+\int_{0}^{t}\Phi'(X_{s^{-}})dX_{s}+\sum_{s\leq t, \Delta X_{s}\neq 0} \Phi(X_{s^{-}}+\Delta X_{s})-\Phi(X_{s^{-}})-\Phi'(X_{s^{-}})\Delta X_{s},
\end{equation*}
where $\Delta X_{s}=X_{s}-X_{s^{-}}$, $\forall s$.
\item Moreover, if $\Phi\in C^{2}$ such that $\Phi''$ is an increasing function and $\Delta X_{s}\geq 0$ for all $s$, then
\begin{equation*}
\Phi(X_{t})-\Phi(X_{0})-\int_{0}^{t}\Phi'(X_{s^{-}})dX_{s}\geq\frac{1}{2}\sum_{s\leq t, \Delta X_{s}\neq 0} \Phi''(X_{s}) (\Delta X_{s})^{2}.
\end{equation*}
If $\Phi\in C^{2}$ such that $\Phi''$ is a decreasing function and $\Delta X_{s}\geq 0$ for all $s$, then
\begin{equation*}
\Phi(X_{t})-\Phi(X_{0})-\int_{0}^{t}\Phi'(X_{s^{-}})dX_{s}\leq\frac{1}{2}\sum_{s\leq t, \Delta X_{s}\neq 0} \Phi''(X_{s^{-}})(\Delta X_{s})^{2}.
\end{equation*}
In particular, choosing $\Phi(x)=x^{2}$, we deduce that 
\begin{equation}
X_{t}^{2}=X_{0}^{2}+2\int_{0}^{t}X_{s^{-}}dX_{s}+\sum_{s\leq t}\left(\Delta X_{s}\right)^{2}.
\end{equation}
\end{enumerate}
\end{lem}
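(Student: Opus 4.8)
The plan is to prove Part 1 first as a pathwise (Stieltjes) change-of-variables formula, and then obtain the inequalities of Part 2 by applying a one-term Taylor expansion to each jump contribution, the sign of the correction being controlled by the monotonicity of $\Phi''$ together with the positivity of the jumps.

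\emph{Part 1.} Since $X$ is c\`adl\`ag and of finite variation, it decomposes as $X_t=X_0+X_t^{c}+\sum_{s\le t}\Delta X_s$, where $X^{c}$ is the continuous finite-variation part and the jump series converges absolutely. First I would establish the formula pathwise when $X$ has finitely many jumps on $[0,t]$: between two consecutive jump times $X$ is continuous and of finite variation, so the classical chain rule gives $\Phi(X_v)-\Phi(X_u)=\int_u^v\Phi'(X_s)\,dX_s$ there, while at a jump time the value of $\Phi(X)$ changes by exactly $\Phi(X_s)-\Phi(X_{s^-})$. Summing the continuous increments and the jumps and rewriting $\int_0^t\Phi'(X_{s^-})\,dX_s^{c}=\int_0^t\Phi'(X_{s^-})\,dX_s-\sum_{s\le t}\Phi'(X_{s^-})\Delta X_s$ yields the stated identity, the correction term reconciling the Stieltjes integral, which only registers $\Phi'(X_{s^-})\Delta X_s$ at a jump, with the true increment $\Phi(X_s)-\Phi(X_{s^-})$. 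To drop the finiteness assumption I would discard the jumps of size $<\varepsilon$, apply the finite-jump formula, and let $\varepsilon\to0$: on the compact range of $X$ the bound $|\Phi(X_s)-\Phi(X_{s^-})-\Phi'(X_{s^-})\Delta X_s|\le 2\sup|\Phi'|\,|\Delta X_s|$, summable because $\sum_{s\le t}|\Delta X_s|<\infty$, makes dominated convergence applicable.

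\emph{Part 2.} By Part 1 the left-hand side equals $\sum_{s\le t,\,\Delta X_s\neq0}R_s$ with $R_s:=\Phi(X_s)-\Phi(X_{s^-})-\Phi'(X_{s^-})\Delta X_s$. For $\Phi\in C^{2}$, Taylor's theorem with Lagrange remainder gives $R_s=\tfrac12\Phi''(\xi_s)(\Delta X_s)^{2}$ for some $\xi_s$ between $X_{s^-}$ and $X_s$, and since $\Delta X_s\ge0$ we have $X_{s^-}\le\xi_s\le X_s$. Monotonicity of $\Phi''$ then locates $\Phi''(\xi_s)$ between its two endpoint values, so that
\[
\tfrac12\Phi''(X_{s^-})(\Delta X_s)^{2}\le R_s\le \tfrac12\Phi''(X_s)(\Delta X_s)^{2}\quad(\Phi''\text{ increasing}),
\]
with the reversed ordering when $\Phi''$ is decreasing, which gives in particular $R_s\le\tfrac12\Phi''(X_{s^-})(\Delta X_s)^{2}$ in the decreasing case. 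Summing over the jumps produces the displayed bounds, the relevant endpoint being the one selected by the direction of monotonicity.

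\emph{Special case and main difficulty.} For $\Phi(x)=x^{2}$ one has $\Phi''\equiv2$, so $R_s=(\Delta X_s)^{2}$ exactly and Part 1 collapses to $X_t^{2}=X_0^{2}+2\int_0^t X_{s^-}\,dX_s+\sum_{s\le t}(\Delta X_s)^{2}$, as claimed. The Taylor step in Part 2 is entirely routine; I expect the only genuinely delicate point to be the rigorous justification of Part 1 in the presence of infinitely many small jumps, namely the absolute convergence of the correction series and the passage to the limit in the finite-jump approximation, which is where the finite-variation hypothesis and the $C^{1}$ regularity of $\Phi$ must be used with care.
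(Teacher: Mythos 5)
Your proof is correct and follows essentially the same route as the paper: Part~1 is the standard change-of-variables formula for finite-variation c\`adl\`ag processes (the paper simply cites Protter, while you sketch the finite-jump-plus-limit argument), and Part~2 is obtained exactly as in the paper by writing each jump correction as $\tfrac12\Phi''(X_{s^-}+\beta_s\Delta X_s)(\Delta X_s)^2$ and squeezing $\Phi''$ between its values at the two endpoints using monotonicity and $\Delta X_s\ge0$. One remark: your two-sided bound $\tfrac12\Phi''(X_{s^-})(\Delta X_s)^2\le R_s\le\tfrac12\Phi''(X_s)(\Delta X_s)^2$ in the increasing case is the correct conclusion (and is all the paper's own proof establishes), whereas the displayed inequality in the lemma for increasing $\Phi''$, namely $\ge$ with the right endpoint $\Phi''(X_s)$, mixes the wrong direction with the wrong endpoint and is false as literally stated (e.g.\ $\Phi(x)=x^3$ with a jump from $0$ to $1$); the version actually used later in the paper (Proposition~\ref{prop2}) is the upper bound $\le\tfrac12\sum\Phi''(X_s)(\Delta X_s)^2$, which your argument does deliver.
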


\begin{proof}[Proof]
The first part of this lemma is a well known result (see
\citealp{Pro05}). We will only prove part 2 of the lemma. If $\Phi\in
C^{2}$ then it follows from Taylor's formula that there exists a
random function $\beta$ taking its values in $[0,1]$ such that for all
$s$
\begin{equation*}
\Phi(X_{s})-\Phi(X_{s^{-}})-\Phi'(X_{s^{-}})\Delta X_{s}=\frac{1}{2}\Phi''(X_{s^{-}}+\beta_{s}\Delta X_{s})\left(\Delta X_{s}\right)^{2}.
\end{equation*}
If $\Phi''$ is an increasing function and $y\geq 0$ then 
\begin{equation*}
\Phi''(x)\leq\Phi''(x+\beta_{s}y)\leq \Phi''(x+y).
\end{equation*}
If $\Phi''$ is a decreasing function and $y\geq 0$ then
\begin{equation*}
\Phi''(x+y)\leq\Phi''(x+\beta_{s}y)\leq \Phi''(x).
\end{equation*}
\end{proof}
Note that $V\leq \infty$ and at this stage we do not assume that $V$
is finite. In the case $m=v$, the asymptotic behavior of the process
$X_{t}$ depends on the asymptotic behavior of the mean net rate of
adaptation $\psi(x)$ defined in~\eqref{eq:psi} as $x\rightarrow
-\infty$.

\bigskip
\begin{prop}\label{prop1}
We assume that $m=v$. If moreover
\begin{equation}\label{eq:cond1}
\limsup_{x\rightarrow -\infty}\left|x\psi(x)\right|<\frac{V}{2},
\end{equation}
then the process $X_t$ is null recurrent. 
\end{prop}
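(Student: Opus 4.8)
The plan is to prove that, in the borderline case $m=v$, the process is recurrent but fails to be positive recurrent; together these give null recurrence. Since by the earlier propositions the classification is governed entirely by the behaviour on $(-\infty,0)$, I work with the generator $\mathcal L\Phi(x)=\psi(x)\Phi'(x)+\int_{\mathbb R_+}[\Phi(x+\alpha)-\Phi(x)-\alpha\Phi'(x)]g(x,\alpha)\nu(d\alpha)$, noting two structural facts. For $x<0$ all jumps are \emph{positive} (only $\alpha>0$ fix) while the downward motion is the continuous drift $-v$, so every downcrossing of a level is overshoot-free. Moreover, since $m(x)\uparrow v$ as $x\to-\infty$, one has $\psi(x)\le 0$ and $x\psi(x)\ge 0$, so \eqref{eq:cond1} lets me fix $\bar c$ and $K>0$ with $x\psi(x)\le\bar c<V/2$ and $V(x)$ close to $V$ for all $x\le -K$. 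It then suffices to establish (i) recurrence, i.e.\ the process returns to $[-K,\infty)$ a.s., and (ii) the absence of a stationary probability measure.

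For (i) I take $\Phi(x)=\log(-x)$ (truncated to a bounded $C^2$ function above $-K$), for which $\Phi'(x)=1/x$ and $\Phi''(x)=-1/x^2$ is decreasing on $\mathbb R_-$. Applying the decreasing-$\Phi''$ inequality of Lemma~\ref{ito-ineq} and compensating the jumps gives $\mathcal L\Phi(x)\le x^{-2}\bigl(x\psi(x)-\tfrac12 V(x)\bigr)$, which is strictly negative for $x\le -K$ by the choice of $\bar c$. Hence $\Phi(X_{t\wedge\tau})$ is a supermartingale along any excursion below $-K$. Stopping at the exit of an interval $(-b,-K)$ and using that the bottom exit is overshoot-free ($X=-b$ exactly there) while $\Phi(-b)=\log b\to\infty$, the supermartingale inequality forces the probability of escaping downward before reaching $-K$ to be at most $\log(-x_0)/\log b$, which vanishes as $b\to\infty$. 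Letting $b\to\infty$ yields recurrence and in particular rules out transience.

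For (ii) the guiding picture is that at $m=v$ the restoring drift $\psi(x)=O(1/x)$ is too weak to confine the process. Comparing with the diffusion with local drift $\psi$ and local variance $V(x)$, the scale density behaves like $|x|^{-2\bar c/V}$, which is integrable at $-\infty$ exactly at the threshold $V/2$, so $\bar c<V/2$ keeps the process recurrent, whereas the speed density behaves like $|x|^{+2\bar c/V}$, which is \emph{never} integrable at $-\infty$; thus no stationary probability measure can exist. To turn this into a proof I intend to show $\mathbb E_{x_0}[\tau_0]=\infty$, where $\tau_0$ is the hitting time of $[0,\infty)$, in contrast with the finite bound obtained in the case $v<m$. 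Concretely I would build, for each large $b$, a scale-type test function $w_b$ on $(-b,0)$ of the shape dictated by the exponent $1-2\bar c/V$ and vanishing on the boundary, feed $\mathcal L w_b$ through Lemma~\ref{ito-ineq}, and conclude $\mathbb E_{x_0}[\tau_0\wedge T_{-b}]\ge w_b(x_0)\to\infty$, whence $\mathbb E_{x_0}[\tau_0]=\infty$ by monotone convergence.

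The main obstacle is precisely step (ii). A test function producing a \emph{growing} lower bound on the expected exit time must be positive in the interior while vanishing at both endpoints, hence concave-like; but a concave function has a \emph{negative} jump correction whose one-sided control via Lemma~\ref{ito-ineq} pushes $\mathcal L w_b$ in the unfavourable direction, while any convex function vanishing at both endpoints is nonpositive throughout. Reconciling these competing requirements—engineering a test function whose second derivative has the monotonicity demanded by Lemma~\ref{ito-ineq} yet still produces the scale/speed asymptotics above—together with controlling the overshoot of the upward jumps across $0$ and the non-constancy of $\psi(x),V(x)$ near the origin (and the case $V=\infty$, where one must keep $V(x)$ rather than $V$ throughout), is where the real work lies, and is, I expect, the reason Lemma~\ref{ito-ineq} is singled out and applied repeatedly in this section.
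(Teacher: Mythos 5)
Your part (i) (recurrence via $\Phi(x)=\log|x|$, the decreasing-$\Phi''$ half of Lemma~\ref{ito-ineq}, and the exit probabilities of $(-N,-L)$) is exactly the paper's argument and is fine. The genuine gap is part (ii), and you have in fact diagnosed it yourself: you want a single test function $w_b$ on $(-b,0)$, vanishing at both endpoints, with $\mathcal L w_b$ of a definite sign, and you correctly observe that such a function must be concave-like, which is precisely the shape for which Lemma~\ref{ito-ineq} gives the jump correction on the \emph{wrong} side. You do not resolve this, and the diffusion scale/speed heuristic, while sound as motivation, is not a proof here (and showing non-existence of a stationary law is in any case weaker than the infinite mean return time one actually needs). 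As written, the proposal establishes recurrence but not null recurrence.

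The paper circumvents the obstruction by never using a concave test function: it splits the lower bound on $\mathbb E T^{\uparrow}_{-L}$ into two estimates with \emph{convex} (indeed affine and quadratic) Lyapounov functions. First, applying the identity for $X$ itself (equivalently $\Phi(x)=|x|$ on $x<0$) on $[0,S_{-N,-L}]$ and using $\psi\le 0$ gives $L\,\mathbb P(T^{\uparrow}_{-L}<T^{\downarrow}_{-N})+N\,\mathbb P(T^{\downarrow}_{-N}<T^{\uparrow}_{-L})\ge|X_0|$, hence $\liminf_N N\,\mathbb P(T^{\downarrow}_{-N}<T^{\uparrow}_{-L})\ge|X_0|-L>0$: the deep level $-N$ is reached before $-L$ with probability at least of order $1/N$. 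Second, the exact It\^o formula for $\Phi(x)=x^2$ (the case of equality in Lemma~\ref{ito-ineq}), together with \eqref{eq:cond1} in the form $-2|x|\psi(x)\le V(x)$ and the monotonicity of $V(\cdot)$, yields $\mathbb E X^2_{S_{-N,-L}}\le X_0^2+2V(-N)\,\mathbb E S_{-N,-L}$, i.e. $\mathbb E S_{-N,-L}\ge\bigl(N^2\,\mathbb P(T^{\downarrow}_{-N}<T^{\uparrow}_{-L})-X_0^2\bigr)/\bigl(2V(-N)\bigr)$. Combining the two and invoking Lemma~\ref{lem:to0} ($V(-N)/N\to 0$, a consequence of $m<\infty$ and thus valid even when $V=\infty$) gives $\mathbb E T^{\uparrow}_{-L}\ge\liminf_N N\,\mathbb P(T^{\downarrow}_{-N}<T^{\uparrow}_{-L})\cdot\frac{N}{2V(-N)}=\infty$. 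This factorization — probability of a deep excursion times a second-moment lower bound on its duration — is the missing idea; without it, or some substitute for the concave test function you cannot control, your step (ii) does not go through.
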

We first establish 
\bigskip
\begin{lem}\label{lem:to0}
Under the condition $m<\infty$, we have that
\[ \lim_{x\to-\infty}\frac{V(x)}{|x|}=0.\] 
\end{lem}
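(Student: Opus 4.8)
The plan is to exploit the compact support of $g(x,\cdot)$ together with the finiteness of $m$ through a truncation argument. First I would record the consequence of condition (1): for $x<0$ we have $g(x,\alpha)=0$ unless $0<\alpha\le 2|x|$, so that
\[
V(x)=\int_0^{2|x|}\alpha^2 g(x,\alpha)\,\nu(d\alpha)\le \int_0^{2|x|}\alpha^2\,\nu(d\alpha).
\]
On this range one has $\alpha^2\le 2|x|\,\alpha$, which already yields $V(x)\le 2|x|\,m$. This, however, only gives boundedness of $V(x)/|x|$, not convergence to zero, and overcoming this crudeness is the real point of the argument.

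To extract the extra decay I would fix a truncation level $A>0$ and split
\[
V(x)\le \int_0^{A}\alpha^2\,\nu(d\alpha)+\int_A^{2|x|}\alpha^2\,\nu(d\alpha)=:C_A+I_x .
\]
The constant $C_A$ is finite: splitting at $\alpha=1$ and using $\alpha^2\le\alpha$ on $(0,1]$ and $\alpha^2\le A\alpha$ on $[1,A]$, one gets $C_A\le \int_0^1\alpha\,\nu(d\alpha)+A\int_1^A\alpha\,\nu(d\alpha)\le \int_0^1\alpha\,\nu(d\alpha)+A\,m<\infty$, invoking \eqref{eq:condition} and the hypothesis $m<\infty$. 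For the tail piece I would again use $\alpha^2\le 2|x|\,\alpha$ to obtain $I_x\le 2|x|\int_A^\infty\alpha\,\nu(d\alpha)$, so that $I_x/|x|\le 2\int_A^\infty\alpha\,\nu(d\alpha)$, a bound that no longer depends on $x$.

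Combining the two estimates gives, for every $A>0$,
\[
\frac{V(x)}{|x|}\le \frac{C_A}{|x|}+2\int_A^\infty\alpha\,\nu(d\alpha).
\]
The conclusion then follows from a two-step $\varepsilon$-argument. Given $\varepsilon>0$, the finiteness of $m=\int_0^\infty\alpha\,\nu(d\alpha)$ means its tail is negligible, so I would first choose $A$ so large that $2\int_A^\infty\alpha\,\nu(d\alpha)<\varepsilon/2$; then, with $A$ (and hence $C_A$) frozen, I would let $x\to-\infty$ so that $C_A/|x|<\varepsilon/2$. This shows $\limsup_{x\to-\infty}V(x)/|x|\le\varepsilon$ for every $\varepsilon>0$, which is the claim.

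I expect the only genuine obstacle to be the one flagged above: the immediate bound $\alpha^2\le 2|x|\,\alpha$ is too lossy and by itself gives only $V(x)/|x|\le 2m$. The resolution is to separate a fixed low-$\alpha$ mass, which is killed upon division by $|x|\to\infty$, from a high-$\alpha$ tail mass, which is made small by the integrability $m<\infty$; this is precisely what forces the limit to vanish rather than merely stay bounded.
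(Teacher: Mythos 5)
Your proof is correct and is essentially identical to the paper's: both split the integral $\int_0^{2|x|}\alpha^2\nu(d\alpha)$ at a fixed truncation level (your $A$, the paper's $2|x_0|$), use $\alpha^2\le 2|x|\alpha$ on the tail, and then send $|x|\to\infty$ followed by the truncation level to infinity, invoking $m<\infty$ to kill the tail. The only differences are cosmetic (naming the cut-off $A$ and phrasing the conclusion as an $\varepsilon$-argument rather than a $\limsup$ in $x$ followed by a limit in $x_0$).
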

\begin{proof}
Consider, for $x<x_0<0$,
\begin{align*}
\frac{V(x)}{|x|}&\le\frac{1}{|x|}\int_0^{2|x|}\alpha^2\nu(d\alpha)\\
&=\frac{1}{|x|}\int_0^{2|x_0|}\alpha^2\nu(d\alpha)+\frac{1}{|x|}\int_{2|x_0|}^{2|x|}\alpha^2\nu(d\alpha)\\
&\le\frac{1}{|x|}\int_0^{2|x_0|}\alpha^2\nu(d\alpha)+2\int_{2|x_0|}^{\infty}\alpha\nu(d\alpha),
\end{align*}
hence
\[\limsup_{x\to-\infty}\frac{V(x)}{|x|}\le2\int_{2|x_0|}^{\infty}\alpha\nu(d\alpha),\]
and our condition implies that the last right hand side tends to 0, as $x_0\to-\infty$. The results follows.
\end{proof}
We can now return to the
\begin{proof}[Proof of Proposition \ref{prop1}]
First note that, since $m=v$ implies $\psi(x)\leq 0$ for all $x\leq 0$, 
condition~\eqref{eq:cond1} is equivalent to
\begin{equation*}
\liminf_{x\rightarrow -\infty}|x|\psi(x)>-\frac{V}{2}.
\end{equation*}
To prove recurrence under condition~\eqref{eq:cond1}, we recall that 
\begin{equation} \label{eq:process}
X_{t}=X_{0}+\int_{0}^{t}\psi(X_{s})ds+\mathcal M_{t}.
\end{equation}
We will apply Lemma 3 with $f(x)=\log|x|$, with $x<0$. Here $f''$ is decreasing. Hence as long as $X_{t}$ remains negative,
\begin{equation*}
\begin{split}
\log|X_{t}|&\leq\log|X_{0}|+\int_{0}^{t}\frac{\psi(X_{s})}{X_{s}}ds+\int_{0}^{t}\frac{1}{X_{s^{-}}}d\mathcal M_{s}-\frac{1}{2}\sum_{s\leq t}\frac{(\Delta X_{s})^{2}}{X^{2}_{s^{-}}}\\
&\leq\log|X_{0}|+\int_{0}^{t}\frac{\psi(X_{s})}{X_{s}}ds+\int_{0}^{t}\frac{1}{X_{s^{-}}}d\mathcal M_{s}\\
&-\frac{1}{2}\int_{0}^{t}\int_{\mathbb{R}_{+}}\int_{0}^{1}\frac{\alpha^{2}\varphi(X_{s^{-}},\alpha,\xi)}{X_{s^{-}}^{2}}\bar{M}(ds,d\alpha,d\xi)-\frac{1}{2}\int_{0}^{t}\frac{V(X_{s})}{X_{s}^{2}}ds\\
&=\log|X_{0}|+\int_{0}^{t}\left(\frac{\psi(X_{s})}{X_{s}}-\frac{V(X_{s})}{2X_{s}^{2}}\right)ds+\hat{\mathcal{M}}_{t},\\
\end{split}
\end{equation*}
where $\hat{\mathcal{M}}$ is a martingale. For all $a<b<0$, define the stopping time
\begin{equation*}
S_{a,b}=\inf(t>0,X_{t}\leq a \mbox{ or } X_{t}\geq b).
\end{equation*}
It follows from our assumption that there exists $L>0$ such that 
\begin{equation}\label{eq:infL}
\inf_{x\leq -L}\left(|x|\psi(x)+\frac{V(x)}{2}\right)>0.
\end{equation}
For any $N>L$, from Doob's optional sampling theorem, if $-N<X_0<L$,
\begin{equation*}
\begin{split}
\mathbb{E}\log|X_{t\wedge S_{-N,-L}}|&\leq \log|X_{0}|+\mathbb{E}\int_{0}^{t\wedge S_{-N,-L}}\left(\frac{\psi(X_{s})}{X_{s}}-\frac{V(X_{s})}{2X_{s}^{2}}\right)ds.\\
\end{split}
\end{equation*}
Letting $t$ tend to $\infty$,  
\begin{equation*}
\mathbb{E}\log|X_{S_{-N,-L}}|\leq\log|X_{0}|.
\end{equation*}
Define the stopping times
\begin{equation*}
\begin{split}
T_{-L}^{\uparrow}&=\inf(t>0, X_{t}\geq -L),\\
T_{-N}^{\downarrow}&=\inf(t>0, X_{t}\leq -N).
\end{split}
\end{equation*}
It follows from the previous estimate that 
\begin{equation*}
\log N \times\mathbb{P}(T_{-N}^{\downarrow}<T_{-L}^{\uparrow})<\log|X_{0}|.
\end{equation*}
We deduce that $\mathbb{P}(T_{-N}^{\downarrow}<T_{-L}^{\uparrow})\rightarrow 0$ 
as $N$ tend to $\infty$. We conclude that the process returns 
a.s.\ an infinite number of times above $-L$, hence also above 
$0$ by a classical argument (see the proof of Proposition~\ref{reccase}). Therefore, the process $X$ is recurrent.

Let now $X_{0}<-(L+1)$. For all $N>L$, multiplying~\eqref{eq:process} by $-1$, we have
\begin{equation*}
|X_{t\wedge S_{-N,-L}}|=|X_{0}|-\int_{0}^{t\wedge
S_{-N,-L}}\psi(X_{s})ds-\int_{0}^{t\wedge S_{-N,-L}}d\mathcal{M}_{s},
\end{equation*}
By Doob's theorem and letting $t$ tend to $\infty$, since again $\psi(x)\leq 0$ for $x\leq 0$
\begin{equation*}
\begin{split}
\mathbb{E}|X_{S_{-N,-L}}|= |X_{0}|-\mathbb{E}\int_{0}^{ S_{-N,-L}}\psi(X_{s})ds&\geq |X_{0}|\mbox{, hence}\\
L\mathbb{P}(T_{-L}^{\uparrow}<T_{-N}^{\downarrow})+N\mathbb{P}(T_{-N}^{\downarrow}<T_{-L}^{\uparrow})&\geq |X_{0}|.\\
\end{split}
\end{equation*}
We have 
\begin{equation} \label{eq:limit}
\liminf_{N\rightarrow\infty} N\mathbb{P}(T_{-N}^{\downarrow}<T_{-L}^{\uparrow})\geq |X_{0}|-L>0.
\end{equation}
It follows from Lemma 3 that 
\begin{equation*}
X_{t}^{2}=X_{0}^{2}-\int_{0}^{t}2|X_{s}|\psi(X_{s})ds+\int_{0}^{t}2X_{s^{-}}d\mathcal{M}_{s}+\sum_{s\leq t}(\Delta X_{s})^{2}.
\end{equation*}
On the other hand,
\begin{equation*}
\begin{split}
\sum_{s\leq t}(\Delta X_{s})^{2}&=
\int_{0}^{t}\int_{\mathbb{R_{+}}}\int_{0}^{1}\alpha^{2}\varphi(X_{s^{-}},\alpha,\xi)\bar{M}(ds,d\alpha,d\xi)\\
&+\int_{0}^{t}\int_{\mathbb{R_{+}}}\alpha^{2}g(X_{s^{-}},\alpha)\nu(d\alpha)ds.\\
\end{split}
\end{equation*}
Thus, from~\eqref{eq:infL} and the monotonicity of $V(x)$
\begin{equation*}
X_{t\wedge S_{-N,-L}}^{2}\leq X_{0}^{2}+\int_{0}^{t\wedge S_{-N,-L}} 2V(-N) ds+\tilde{\mathcal{M}}_{t\wedge S_{-N,-L}},
\end{equation*}
where $\tilde{\mathcal{M}}_{\cdot\wedge S_{-N,-L}}$ is a martingale.
Letting $t$ tend to $\infty$, we have for all $\epsilon>0$
\begin{equation*}
\begin{split}
\mathbb{E}X_{S_{-N,-L}}^{2}&\leq X_{0}^{2}+2V(-N)\mathbb{E}S_{-N,-L}\text{, hence}\\
\mathbb{E}S_{-N,-L}&\geq\frac{L^{2}\mathbb{P}(T_{-L}^{\uparrow}<T_{-N}^{\downarrow})+N^{2}\mathbb{P}(T_{-N}^{\downarrow}<T_{-L}^{\uparrow})-X_{0}^{2}}{2V(-N)}.\\
\end{split}
\end{equation*}
It follows by monotone convergence that 
\begin{equation*}
\mathbb{E}(T_{-L}^{\uparrow})=\lim_{N\rightarrow\infty}\mathbb{E}S_{-N,-L}\geq \liminf_{N\rightarrow\infty}\left\{N\mathbb{P}(T_{-N}^{\downarrow}<T_{-L}^{\uparrow})\times \frac{N}{2V(-N)}-\frac{X_{0}^{2}}{2V(-N)}\right\}.
\end{equation*}
Combining this with Lemma \ref{lem:to0} and~\eqref{eq:limit}, we
deduce that $\mathbb{E}T_{-L}^{\uparrow}=\infty$ and the process is
null recurrent.
\end{proof}
\begin{rmk}
Condition~\eqref{eq:cond1} is rather weak. It is satisfied as soon as both the measure $\nu$
and $V$ are finite. We give the proof below. 
It is also satisfied for some measures that don't have a second moment such as 
\begin{equation*}
\nu(d\alpha)\approx\frac{d\alpha}{\alpha^{2+\delta}}\mathbf 1_{\{\alpha\geq 1\}}, \quad \frac{1}{2}<\delta\leq 1.
\end{equation*}
\end{rmk}
\bigskip
\begin{prop}
\label{propbetween}
If $\nu$ is a finite measure, $V<\infty$ and the fixation probability is given by~\eqref{eq:pfix}
and~\eqref{eq:selcoef}, then \eqref{eq:cond1} is satisfied. 
\end{prop}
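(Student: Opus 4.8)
The plan is to verify directly that, under these assumptions, the $\limsup$ in \eqref{eq:cond1} in fact equals $0$, which is trivially $<V/2$ since $V>0$ (indeed $v=m>0$ forces $\nu$ to charge $(0,\infty)$, so $V=\int_0^\infty\alpha^2\nu(d\alpha)>0$). First I would record the shape of $\psi$ on the negative half-line. Recall we are in the case $v=m$, and for $x<0$ only mutations with $\alpha>0$ can fix, so
\[
\psi(x)=m(x)-m=-\int_0^\infty \alpha\bigl(1-g(x,\alpha)\bigr)\nu(d\alpha),
\]
whence $|x\psi(x)|=|x|\int_0^\infty \alpha\bigl(1-g(x,\alpha)\bigr)\nu(d\alpha)$. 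Inserting the explicit fixation probability from \eqref{eq:pfix}--\eqref{eq:selcoef} gives, for $x<0$ and $\alpha>0$, the relation $1-g(x,\alpha)=e^{-2\sigma\alpha(2|x|-\alpha)^{+}}$, equal to $e^{-2\sigma\alpha(2|x|-\alpha)}$ when $0<\alpha\le 2|x|$ and equal to $1$ when $\alpha>2|x|$. It therefore suffices to show that $r\int_0^\infty \alpha\,e^{-2\sigma\alpha(2r-\alpha)^{+}}\nu(d\alpha)\to 0$ as $r=|x|\to\infty$.

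I would split this integral at $\alpha=r$. On $\{\alpha>r\}$ I bound $e^{-2\sigma\alpha(2r-\alpha)^{+}}\le 1$ and use $r\le\alpha$, so that this part is at most $\int_r^\infty \alpha^2\,\nu(d\alpha)$, which tends to $0$ because $V<\infty$. On $\{0<\alpha\le r\}$ I use $2r-\alpha\ge r$ to obtain $e^{-2\sigma\alpha(2r-\alpha)}\le e^{-2\sigma r\alpha}$, so the integrand is dominated by $r\alpha\,e^{-2\sigma r\alpha}$; by the elementary inequality $t\,e^{-2\sigma t}\le \tfrac{1}{2\sigma e}$ (maximum at $t=1/(2\sigma)$), this is uniformly bounded by the constant $\tfrac{1}{2\sigma e}$.

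The one point that needs care --- and the only real obstacle --- is that this uniform bound does not by itself yield vanishing: it only gives $\tfrac{1}{2\sigma e}\,\nu\bigl((0,r]\bigr)$, which converges to $\tfrac{1}{2\sigma e}\,\nu\bigl((0,\infty)\bigr)>0$. The decay has to come from the non-uniform, purely pointwise convergence of the integrand near $\alpha=0$. I would settle this by dominated convergence applied to $f_r(\alpha)=r\alpha\,e^{-2\sigma r\alpha}\,\mathbf{1}_{\{0<\alpha\le r\}}$: each $f_r$ is dominated by the constant $\tfrac{1}{2\sigma e}$, which is $\nu$-integrable precisely because $\nu$ is finite, while $f_r(\alpha)\to 0$ for every fixed $\alpha>0$. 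Hence $\int f_r\,d\nu\to 0$. Combining the two regions yields $|x\psi(x)|\to 0$ as $x\to-\infty$, so $\limsup_{x\to-\infty}|x\psi(x)|=0<V/2$, which is exactly \eqref{eq:cond1}.
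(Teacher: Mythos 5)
Your proof is correct and follows essentially the same route as the paper's: both reduce the claim to showing $|x|\int_0^\infty\alpha\,e^{-2\sigma\alpha(2|x|-\alpha)^{+}}\nu(d\alpha)\to 0$, split the integral at $\alpha=|x|$, control the upper piece by the tail of $V<\infty$, and handle the lower piece via the bound $t e^{-2\sigma t}\le (2\sigma e)^{-1}$ together with dominated convergence, which is exactly where the finiteness of $\nu$ enters. The only cosmetic difference is that you establish $\limsup_{x\to-\infty}|x\psi(x)|=0$ directly (rightly noting that $V>0$ is needed for the strict inequality), whereas the paper tracks $D(x)=|x\psi(x)|-V(x)/2\to -V/2$; the two are equivalent since $V(x)\to V$.
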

\begin{proof}
Let for all $x$,
\begin{equation*}
\begin{split}
D(x)=|x\psi(x)|-\frac{V(x)}{2}&=|x|\int_{2|x|}^{\infty}\alpha\nu(d\alpha)-\int_0^{2|x|}\frac{\alpha^2}{2}\nu(d\alpha)\\
&+\int_0^{2|x|}\left(|x|\alpha+\frac{\alpha^2}{2}\right)e^{-2\sigma \alpha\left(2|x|-\alpha\right)}\nu(d\alpha).\\
\end{split}
\end{equation*}
It follows by dominated convergence that
\begin{equation*}
\frac{1}{2}\int_0^{\infty}\alpha^2 e^{-2\sigma \alpha\left(2|x|-\alpha\right)}\mathbf 1_{[0,2|x|]}\nu(d\alpha)\xrightarrow[x\rightarrow -\infty]{}0,
\end{equation*}
since , 
\begin{equation*}
\begin{split}
&\alpha^2 e^{-2\sigma \alpha\left(2|x|-\alpha\right)}\mathbf 1_{[0,2|x|]}\xrightarrow[x\rightarrow -\infty]{a.s.}0 \text{ for all } \alpha>0,\\
\text{and }&\alpha^2 e^{-2\sigma \alpha\left(2|x|-\alpha\right)}\mathbf 1_{[0,2|x|]}\leq \alpha^2\in L^1(\nu).\\
\end{split}
\end{equation*}
On the other hand, 
\begin{equation*}
\int_0^{2|x|}|x|\alpha e^{-2\sigma \alpha\left(2|x|-\alpha\right)}\nu(d\alpha)=\int_0^{|x|}|x|\alpha e^{-2\sigma \alpha\left(2|x|-\alpha\right)}\nu(d\alpha)
+\int_{|x|}^{2|x|}|x|\alpha e^{-2\sigma \alpha\left(2|x|-\alpha\right)}\nu(d\alpha).
\end{equation*}
Note that if $0\leq \alpha \leq |x|$ then $2|x|-\alpha\geq |x|$, thus
\begin{equation*}
e^{-2\sigma \alpha(2|x|-\alpha)}\leq e^{-2\sigma \alpha|x|}.
\end{equation*}
In addition the function 
\begin{equation*}
\begin{array}{cccc}
f_{\sigma}:& \R_+ &\rightarrow & \R_+\\
& z & \rightarrow & f_{\sigma}(z)=ze^{-\sigma z}
\end{array}
\end{equation*}
has a maximum for $z=\frac{1}{\sigma}$. It follows that 
\begin{equation*}
\int_0^{|x|}|x|\alpha e^{-2\sigma \alpha\left(2|x|-\alpha\right)}\nu(d\alpha)\leq \frac{2 }{\sigma e}\int_0^{\infty}e^{-\sigma |x|\alpha}\nu(d\alpha)\xrightarrow[x\rightarrow -\infty]{}0,
\end{equation*}
again by dominated convergence.
The second term also goes to 0 when $x\rightarrow -\infty$. In fact,
\begin{equation*}
\int_{|x|}^{2|x|}|x|\alpha e^{-2\sigma \alpha\left(2|x|-\alpha\right)}\nu(d\alpha)\leq \int_{0}^{\infty}\alpha^2 e^{-2\sigma \alpha\left(2|x|-\alpha\right)}\nu(d\alpha),
\end{equation*} 
and by the same argument as before the result follows since
\begin{equation*}
\begin{split}
&\alpha^2 e^{-2\sigma \alpha\left(2|x|-\alpha\right)}\mathbf 1_{[|x|,2|x|]}\xrightarrow[x\rightarrow -\infty]{a.s.}0 \text{ for all } \alpha>0,\\
\text{and }&\alpha^2 e^{-2\sigma \alpha\left(2|x|-\alpha\right)}\mathbf 1_{[|x|,2|x|]}\leq \alpha^2\in L^1(\nu).
\end{split}
\end{equation*} 
Furthermore, it follows from the fact that $V$ is finite that
\begin{equation*}
|x|\int_{2|x|}^{\infty}\alpha \nu(d\alpha)\leq \int_{2|x|}^{\infty}\frac{\alpha^2}{2} \nu(d\alpha)\xrightarrow[x\rightarrow -\infty]{}0.
\end{equation*}
Hence,
\begin{equation*}
\limsup_{x\rightarrow -\infty}D(x)=-\frac{V}{2}<0.
\end{equation*}
\end{proof}

We now consider the case $m=v$ and $\liminf\limits_{x\rightarrow
-\infty}\left|x\psi(x)\right|>\frac{V}{2} $, which implies in
particular that $V<\infty$.
\bigskip
\begin{prop}\label{prop2}
Assume that $m=v$ and
\begin{equation}\label{eq:cond2}
\liminf_{x\rightarrow -\infty}\left|x\psi(x)\right|>\frac{V}{2}.
\end{equation}
If, moreover, there exist $0<p_{0}<1$ and $0<\beta_{0}<1$
such that for all $0<\beta<\beta_{0}$
\begin{equation} \label{eq:prop6}
|x|^{p_{0}+2}\int_{-\beta x}^{\infty}\alpha^{2}g(x,\alpha)\nu(d\alpha)\xrightarrow[x\rightarrow -\infty]{}0,
\end{equation}
then $X_{t}$ is transient, that is, $X_{t}\rightarrow -\infty$, and moreover $\frac{X_{t}}{t}\rightarrow 0$.
\end{prop}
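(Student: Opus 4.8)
The plan is to produce a bounded Lyapunov function that makes $f(X_t)$ a supermartingale while $X_t$ stays below a large negative level, and then read off both transience and the zero speed from it. I would take $f(x)=|x|^{-p}$ for $x\le -1$, extended to a nondecreasing, bounded $C^2$ function on all of $\mathbb R$ (e.g.\ capped to a constant on $[-1,\infty)$), with the exponent $p\in(0,p_0]$ and the splitting parameter $\beta\in(0,\beta_0)$ both to be fixed small at the end. Two preliminary remarks make this workable: if \eqref{eq:prop6} holds for $p_0$ it holds for every $p\le p_0$, since $|x|^{p+2}\le|x|^{p_0+2}$ for $|x|\ge1$, so $p$ may be treated as a free small parameter; and \eqref{eq:cond2} forces $V<\infty$, while $V(x)\uparrow V$ and $\psi(x)\le 0$ on $\mathbb R_-$ because $m=v$.

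First I would compute the extended generator. Compensating the jump term of \eqref{eq:Xt1} via the Itô formula of Lemma \ref{ito-ineq}, for $x<0$,
\[
\mathcal Lf(x)=\psi(x)f'(x)+\int_0^{2|x|}\bigl(f(x+\alpha)-f(x)-\alpha f'(x)\bigr)g(x,\alpha)\,\nu(d\alpha).
\]
Since $f'(x)=p|x|^{-p-1}$, the drift term equals $-p\,|x\psi(x)|\,|x|^{-p-2}$. I split the integral at $\alpha=\beta|x|$. On $\{\alpha\le\beta|x|\}$, Taylor's formula together with the fact that $f''(y)=p(p+1)|y|^{-p-2}$ is \emph{increasing} on $\mathbb R_-$ bounds the integrand by $\tfrac12\alpha^2 f''(x+\alpha)\le \tfrac12 p(p+1)(1-\beta)^{-p-2}|x|^{-p-2}\alpha^2$, so that piece is at most $\tfrac12 p(p+1)(1-\beta)^{-p-2}V(x)|x|^{-p-2}$. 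On $\{\alpha>\beta|x|\}$ I drop the two negative terms $-f(x)$ and $-\alpha f'(x)$, use $f\le\|f\|_\infty$, and bound $1\le \alpha^2/(\beta|x|)^2$; this piece is then at most $(\beta|x|)^{-2}\int_{\beta|x|}^{\infty}\alpha^2 g(x,\alpha)\nu(d\alpha)=o(|x|^{-p-2})$ by \eqref{eq:prop6}. Collecting the estimates,
\[
\mathcal Lf(x)\le p|x|^{-p-2}\Bigl[-|x\psi(x)|+\tfrac12(p+1)(1-\beta)^{-p-2}V(x)+o(1)\Bigr].
\]
Because $\liminf_{x\to-\infty}|x\psi(x)|>V/2\ge V(x)/2$ by \eqref{eq:cond2}, choosing $p\le p_0$ and $\beta<\beta_0$ small enough renders the bracket negative for all $x$ below some level $-L$; hence $\mathcal Lf\le 0$ on $(-\infty,-L]$.

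I would then exploit this. With $T=\inf\{t:X_t\ge -L\}$, the process $f(X_{t\wedge T})$ is a nonnegative supermartingale, so for $x_0\le -L$ optional sampling together with $f(X_T)\ge f(-L)=L^{-p}$ on $\{T<\infty\}$ gives $\mathbb P_{x_0}(T<\infty)\le (L/|x_0|)^{p}$, which tends to $0$ as $x_0\to-\infty$. Transience follows: the convergent nonnegative supermartingale must have limit $0$ on $\{T=\infty\}$ (a strictly positive finite limit would pin $X_t$ at a non-absorbing point, which the dynamics forbid), i.e.\ $X_t\to-\infty$ there; and since the escape probability can be made arbitrarily close to $1$ from sufficiently deep starts, a strong Markov bootstrap across levels $-L_k\to-\infty$ upgrades this to $X_t\to-\infty$ almost surely, in the same spirit as the recurrence propagation in Proposition \ref{reccase}.

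The zero speed is comparatively soft. From \eqref{eq:Xt1} and $\psi\le0$ we have $X_t\le X_0+\mathcal M_t$, hence $\limsup_t X_t/t\le0$; conversely, writing $X_t/t=X_0/t-v+t^{-1}\int_0^t m(X_s)\,ds+\mathcal M_t/t$ and using $X_s\to-\infty$ so that $m(X_s)\to m=v$, together with $\mathcal M_t/t\to0$ (the martingale law of large numbers, valid here since $m<\infty$ and $X_t\to-\infty$, cf.\ Remark \ref{Mplus}), yields $\liminf_t X_t/t\ge0$, so $X_t/t\to0$. I expect the generator estimate to be the crux: keeping the large-jump contribution genuinely lower order via \eqref{eq:prop6}, absorbing jumps that overshoot $-L$ (even into $\mathbb R_+$) through the bounded extension of $f$, and combining the three regimes with constants sharp enough that the bracket is negative; the passage from ``return probability $<1$'' to ``$X_t\to-\infty$ a.s.''\ is the other delicate point, though it mirrors the bootstrap already used for recurrence.
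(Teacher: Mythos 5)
Your proposal is correct and follows essentially the same route as the paper: the Lyapunov function $|x|^{-p}$ with $p\le p_0$, the split of the jump integral at $\alpha=\beta|x|$ (Taylor with the monotone $f''$ on the near part, condition \eqref{eq:prop6} on the far part), the resulting supermartingale and optional sampling giving a return probability bounded away from $1$, and Lemma 1 / Remark \ref{Mplus} for the zero speed. The only differences are cosmetic — you cap $f$ to a bounded function and use a one-sided stopping time where the paper uses an unbounded quadratic extension and the two-sided time $S_{-kN,-N}$ with $k\to\infty$ — and your generator estimate reproduces the content of the paper's Lemma 4.
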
 
\bigskip
\begin{rmk}
The conditions of Proposition~\ref{prop2} are satisfied in the case where both $\nu$
is infinite and its tail is thin enough, while $g$ is given by~\eqref{eq:pfix} and \eqref{eq:selcoef}. For example, if
\begin{equation*}
\nu(d\alpha)=\left(\frac{1}{\alpha^{1+\delta}}\mathbf{1}_{|\alpha|<1}+\rho(\alpha)\mathbf{1}_{|\alpha|>1}\right)d\alpha,
\end{equation*}
where $\rho(\alpha)\leq C|\alpha|^{-(5+\delta')}, \: |\alpha|>1$ for some $\delta,\delta'>0$. 
Condition~\eqref{eq:cond2} follows from the fact that $V<\infty$ while $|x\psi(x)|\rightarrow \infty$
as $|x|\rightarrow\infty$, since, cf. proof of Proposition~\ref{propbetween},
\begin{equation*}
\begin{split}
|x|\int_0^1 \alpha e^{-2\sigma\alpha(2|x|-\alpha)}\nu(d\alpha)
&\geq |x|\int_0^1\alpha^{-\delta} e^{-4\sigma\alpha|x|}d\alpha\\
&=|x|^\delta \int_0^{|x|}e^{-4\sigma z} \frac{dz}{z^\delta}.\\
\end{split}
\end{equation*}
Condition~\eqref{eq:prop6} is easy to check.
\end{rmk}
\bigskip
\begin{proof}[Proof]
First note that condition~\eqref{eq:cond2} is equivalent to
\begin{equation*}
\limsup_{x\rightarrow -\infty}|x|\psi(x)<-\frac{V}{2}.
\end{equation*}
Hence there exist $K>0$ and $0<p\leq p_{0}$ such that
\begin{equation}\label{eq:condsup}
\sup_{x\leq -K}\left(|x|\psi(x)+(2p+1)\frac{V(x)}{2}\right)<0.
\end{equation}
Let $f$ be the $C^{2}(\mathbb{R})$-function such that $f(-1) = 1$, $f'(-1)=p$, and
\begin{equation*}
f''(x)=\frac{p(p+1)}{|x|^{p+2}}\mathbf{1}_{\{x\leq -1\}}+p(p+1) \mathbf{1}_{\{x\geq -1\}},
\end{equation*}
with $p$ being a real number in $(0,1)$ for which~\eqref{eq:condsup} holds.
Then it follows from Lemma 3 applied to $f$, since $f''$ is an
increasing function,
\begin{equation*}
f(X_{t})\leq
f(X_{0})+\int_{0}^{t}\psi(X_{s})f'(X_{s})ds+\frac{1}{2}\int_{0}^{t}\int_{0}^{\infty}f''(X_{s}+\alpha)\alpha^{2}g(X_{s},\alpha)\nu(d\alpha)ds+\mathcal{N}_{t},
\end{equation*}
where the martingale $\mathcal{N}$ is defined by
\begin{equation*}
\mathcal{N}_{t}=\frac{1}{2}\int_{0}^{t}\int_{0}^{\infty}\int_{0}^{1}\left[f'(X_{s^-})+f''(X_{s^{-}}+\alpha)\alpha^{2}\right]\varphi(X_{s^{-}},\alpha,\xi)\bar{M}(ds,d\alpha,d\xi).
\end{equation*}
Let us admit for the moment:
\bigskip
\begin{lem}
If~\eqref{eq:prop6} holds, then
\begin{equation*}
 \lim_{x\rightarrow
 -\infty}|x|^{p+2}\int_{0}^{\infty}f''(x+\alpha)\alpha^{2}g(x,\alpha)\nu(d\alpha)=p(p+1)V.
\end{equation*}
\end{lem}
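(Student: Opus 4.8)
The plan is to show that only mutations with $\alpha$ small compared with $|x|$ contribute in the limit, while condition~\eqref{eq:prop6} is exactly what kills the contribution of the large mutations. Three facts will be used repeatedly: for $x<0$ the support of $g(x,\cdot)$ lies in $(0,2|x|]$; the function $f''$ satisfies $f''\le p(p+1)$ everywhere (since $|x+\alpha|^{p+2}\ge1$ on $\{x+\alpha\le-1\}$); and for $0<\alpha\le|x|-1$ one has $x+\alpha\le-1$, so that $f''(x+\alpha)=p(p+1)\,(|x|-\alpha)^{-(p+2)}$. Write $Q(x)=|x|^{p+2}\int_0^\infty f''(x+\alpha)\alpha^2 g(x,\alpha)\nu(d\alpha)$ for the quantity of interest.

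First I would fix $\beta\in(0,\beta_0)$ and split the integral at $\beta|x|=-\beta x$. On the tail $[\beta|x|,\infty)$ I use the crude bound $f''\le p(p+1)$ to get
\[
|x|^{p+2}\int_{\beta|x|}^{\infty}f''(x+\alpha)\alpha^2 g(x,\alpha)\nu(d\alpha)\le p(p+1)\,|x|^{p+2}\int_{\beta|x|}^{\infty}\alpha^2 g(x,\alpha)\nu(d\alpha),
\]
and since $p\le p_0$, the right-hand side tends to $0$ by~\eqref{eq:prop6}. This is the step that makes essential use of the hypothesis, and I expect it to be the main obstacle: for $\alpha$ comparable to or larger than $|x|$ the point $x+\alpha$ is near $0$ or positive, so $f''(x+\alpha)$ is of order $1$ rather than $|x|^{-(p+2)}$ and $|x|^{p+2}f''(x+\alpha)$ is enormous; it is precisely this contribution that \eqref{eq:prop6} is tailored to suppress. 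As a by-product (letting the $|x|^{p_0+2}$ factor tend to infinity) I also record that $\int_{\beta|x|}^{\infty}\alpha^2 g(x,\alpha)\nu(d\alpha)\to0$.

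Next, on $[0,\beta|x|)$, for $|x|$ large enough that $(1-\beta)|x|\ge1$, I substitute the explicit form of $f''$ and factor out $|x|^{p+2}$:
\[
|x|^{p+2}\int_{0}^{\beta|x|}f''(x+\alpha)\alpha^2 g(x,\alpha)\nu(d\alpha)=p(p+1)\int_{0}^{\beta|x|}\Bigl(\frac{|x|}{|x|-\alpha}\Bigr)^{p+2}\alpha^2 g(x,\alpha)\nu(d\alpha).
\]
On this range $\alpha/|x|\le\beta$, so the bracketed factor lies in $[1,(1-\beta)^{-(p+2)}]$, and the integral is squeezed between $\int_{0}^{\beta|x|}\alpha^2 g(x,\alpha)\nu(d\alpha)$ and $(1-\beta)^{-(p+2)}$ times the same quantity. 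Writing $\int_0^{\beta|x|}\alpha^2 g\,\nu=V(x)-\int_{\beta|x|}^{\infty}\alpha^2 g\,\nu$ and combining $V(x)\to V$ with the tail estimate of the previous step yields $\int_0^{\beta|x|}\alpha^2 g(x,\alpha)\nu(d\alpha)\to V$ as $x\to-\infty$, for each fixed $\beta$.

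Finally I combine the two pieces. Adding the (vanishing) tail to the sandwiched main part gives, for every fixed $\beta\in(0,\beta_0)$,
\[
p(p+1)V\le\liminf_{x\to-\infty}Q(x)\le\limsup_{x\to-\infty}Q(x)\le p(p+1)(1-\beta)^{-(p+2)}V.
\]
Letting $\beta\downarrow0$ collapses the outer bounds to $p(p+1)V$ (note that condition~\eqref{eq:cond2}, in force here, guarantees $V<\infty$, so the limit is finite), which proves the claim.
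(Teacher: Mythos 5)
Your proof is correct and follows essentially the same route as the paper's: split the integral at $-\beta x$, bound $f''(x+\alpha)$ between $p(p+1)|x|^{-(p+2)}$ and $p(p+1)(1-\beta)^{-(p+2)}|x|^{-(p+2)}$ on the main range, kill the tail with~\eqref{eq:prop6} using $p\le p_0$, and let $\beta\downarrow 0$ in the resulting sandwich. The only (harmless) difference is that you make the squeeze on $[0,\beta|x|)$ and the limit $\int_0^{\beta|x|}\alpha^2 g\,\nu\to V$ slightly more explicit than the paper does.
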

This implies that
\begin{equation*}
\lim_{x\rightarrow
-\infty}|x|^{p+2}\int_{0}^{\infty}f''(x+\alpha)\alpha^{2}g(x,\alpha)\nu(d\alpha)<\lim_{x\rightarrow
-\infty}p(2p+1)V(x).
\end{equation*}
Hence, there exists $N\geq K$ such that for all $x\leq -N$, 
\begin{equation*}
\int_{0}^{\infty}f''(x+\alpha)\alpha^{2}g(x,\alpha)\nu(d\alpha)<p(2p+1)\frac{V(x)}{|x|^{p+2}}.
\end{equation*}
Thus, for all $k>0$ satisfying $-kN<X_{0}<-N$,
\begin{equation*}
\begin{split}
f(X_{t\wedge S_{-kN,-N}})\ \leq\ &f(X_{0}) + \int_{0}^{t\wedge
S_{-kN,-N}}\frac{p}{|X_{s}|^{p+1}}\left[\psi(X_{s})+(2p+1)\frac{V(X_{s})}{2|X_{s}|}\right]ds\\
&+ \mathcal{N}_{t\wedge S_{-kN,-N}}.
\end{split}
\end{equation*}
Now if $k\geq 3$, letting $X_{0}=-2N$, it follows from~\eqref{eq:condsup} that
\begin{equation*}
\mathbb{E}(f(X_{t\wedge S_{-kN,-N}}))\leq \frac{1}{(2N)^{p}}.
\end{equation*}
Thus, if we let $t$ tend to $\infty$,
\begin{equation*}
\frac{1}{N^{p}}\mathbb{P}(S_{-kN,-N}=T_{-N}^{\uparrow}))\leq \mathbb{E}\frac{1}{|X_{S_{-kN,-N}}|^{p}}\leq\frac{1}{(2N)^{p}}.
\end{equation*}
Now letting $k$ tend to $\infty$,
\begin{equation*}
\mathbb{P}(T_{-N}^{\uparrow}<\infty)\leq \frac{1}{2^{p}}.
\end{equation*}
Thus, the process is transient, which means
\begin{equation*}
X_{t}\xrightarrow[t\rightarrow \infty]{} -\infty.
\end{equation*}
And since $m=v<\infty$, it follows from Lemma 1 that $\frac{\mathcal{M}_{t}}{t}\rightarrow 0$, hence 
\begin{equation*}
\frac{X_{t}}{t}\xrightarrow[t\rightarrow \infty]{}0.
\end{equation*}
\end{proof}
\begin{proof}[Proof of Lemma 4]
For any $0<\beta<\beta_{0}<1$, if $x<-(1-\beta)^{-1}$,
\begin{equation*}
\begin{split}
|x|^{p+2}\int_{0}^{\infty}f''(x+\alpha)g(x,\alpha)\alpha^{2}\nu(d\alpha)&=|x|^{p+2}\int_{0}^{-\beta
x}f''(x+\alpha)\alpha^{2}g(x,\alpha)\nu(d\alpha)\\
&+|x|^{p+2}\int_{-\beta
x}^{\infty}f''(x+\alpha)\alpha^{2}g(x,\alpha)\nu(d\alpha)\\
&\leq \int_{0}^{-\beta
x}\frac{p(p+1)}{(1-\beta)^{p+2}}\alpha^{2}g(x,\alpha)\nu(d\alpha)\\
&+ |x|^{p+2} p(p+1)\int_{-\beta
x}^{\infty}\alpha^{2}g(x,\alpha)\nu(d\alpha).\\
\end{split}
\end{equation*}
On the other hand,
\begin{equation*}
\begin{split}
 |x|^{p+2}\int_{0}^{\infty}f''(x+\alpha)\alpha^{2}g(x,\alpha)\nu(d\alpha)&\geq
 p(p+1)\int_{0}^{-\beta
 x}\frac{|x|^{p+2}}{|x+\alpha|^{p+2}}\alpha^{2}g(x,\alpha)\nu(d\alpha)\\
 &>p(p+1)\int_{0}^{-\beta x}\alpha^{2}g(x,\alpha)\nu(d\alpha).\\
 \end{split}
\end{equation*}
Letting $x\rightarrow -\infty$ in the two above inequalities, we deduce from~\eqref{eq:prop6}, which holds with $p_{0}$ replaced by $p\leq p_{0}$,
\begin{equation*}
\begin{split}
p(p+1)V&\leq\liminf_{x\rightarrow -\infty}
|x|^{p+2}\int_{0}^{\infty}f''(x+\alpha)\alpha^{2}g(x,\alpha)\nu(d\alpha)\\
&\leq \limsup_{x\rightarrow -\infty}|x|^{p+2}\int_{0}^{\infty}
f''(x+\alpha)\alpha^{2}g(x,\alpha)\nu(d\alpha)\\
&\leq \frac{p(p+1)}{(1-\beta)^{p+2}}V.\\
\end{split}
\end{equation*}
Thus, letting $\beta\rightarrow 0$, it follows that
\begin{equation*}
 |x|^{p+2}\int_{0}^{\infty}f''(x+\alpha)\alpha^{2}g(x,\alpha)\nu(d\alpha)\xrightarrow[x\rightarrow -\infty]{}p(p+1)V.
\end{equation*}
\bigskip
\end{proof}
\begin{rmk}
We have not been able to precise the large time behavior of the process $X_t$ when the
measure $\nu$ is of the type
\begin{equation*}
\nu(d\alpha)\approx \frac{d\alpha}{\alpha^{2+\delta}}\mathbf 1_{\{\alpha\geq 1\}}, \quad 0<\delta\leq \frac{1}{2},
\end{equation*} 
which still satisfies $m<\infty$. In this case, $V=\infty$, $|x\psi(x)|\rightarrow\infty$ as $|x|\rightarrow\infty$, and~\eqref{eq:prop6} also fails.
\end{rmk}

\section{Generalization to the case of a time-variable speed}\label{sec5}
In the following, we treat the case where the speed of environmental
change is a random function of time 
\begin{equation}\label{eq:vt}
v(t)=\int_{0}^{t}v_1(s)ds+\mathcal{R}_t
\end{equation} 
where $v_1$ is a random function such that
\begin{equation*}
\frac{1}{t}\int_{0}^{t}v_1(s)ds\xrightarrow[t\rightarrow\infty]{}\bar v,
\end{equation*}
and $\mathcal R$ is a stochastic process. 
The stochastic equation describing the evolution of phenotypic lag becomes
\begin{equation}\label{eq:processv}
X_{t}= X_{0}-v(t)+\int_{0}^{t}m(X_{s})ds+\mathcal M_{t}.
\end{equation}

As above, we study three cases:

\subsection{The case $\bar{v}>m$}

Here we assume that $\mathcal R$ satisfies the condition
\begin{equation*}
\frac{\mathcal R_t}{t}\xrightarrow[t\rightarrow \infty]{}0.
\end{equation*}
This condition is verified by a Brownian motion for example.
It is easy to see that results~\eqref{eq:prop2}
and~\eqref{eq:martconv} hold in the new context of
equation~\eqref{eq:processv}. Following the steps of the proof in
section 1, we can see that $X_{t}\rightarrow - \infty$ with speed
$\bar{v}-m$.

\subsection{The case $\bar{v}<m$}
Define $\mathcal T$ as the set of bounded stopping times.
Now we assume that there exists $0<c<\infty$ such that 
$\mathbb E\mathcal R_T\leq c$ for all $T\in\mathcal T$.
This condition is verified for example by a process sum of 
a martingale and a bounded process.
In this case, we will prove that the process $X_{t}$ is positive
recurrent. We can see from~\eqref{eq:prop2} and~\eqref{eq:vt}  
that there exist $M,N>0$ such that for $y<-M$ and $t>N$,
\begin{equation*}
m(x)-\frac{1}{t}\int_{0}^{t}v_1(s)ds > \frac{m-\bar{v}}{2}.
\end{equation*} 
We remind that
\begin{equation*}
T_{-M}^{\uparrow}=\inf \{t>0,X_{t}\geq-M\}.
\end{equation*}
For the purpose of notation and without loss of generality, we denote
$T_{-M}^{\uparrow}$ by $T$. Assume that $X_{0}< -M$. It follows that for
all $t>N$,
\begin{equation*}
\mathbb{E}\int_{0}^{t\wedge T}\left[m(X_{s}) -v_1(s)\right]ds<-X_{0}+\mathbb E \mathcal R_{t\wedge T},
\end{equation*}
since $X_s<0$ for $s<T$. We have
\begin{equation*}
\begin{split}
\mathbb{E}\int_{0}^{t\wedge T}\left[m(X_{s})-v_1(s)\right]ds&=\mathbb{E}\mathbf{1}_{T\geq N}\int_{0}^{t\wedge T}\left[m(X_{s})-v_1(s)\right]ds\\
&+\mathbb{E}\mathbf{1}_{T<N}\int_{0}^{t\wedge T}\left[m(X_{s})-v_1(s)\right]ds\\
&<-X_{0}+\mathbb E  \mathcal R_{t\wedge T},\\
\end{split}
\end{equation*}
and hence,
\begin{equation*}
\begin{split}
\frac{m-\bar v}{2}\mathbb{E}\left(\mathbf{1}_{T\geq N}(t\wedge T)\right)&\leq -X_0+\mathbb E  \mathcal R_{t\wedge T}-\mathbb{E}\mathbf{1}_{T<N}\int_{0}^{t\wedge T}\left[m(X_{s})-v_1(s)\right]ds\\
&\leq -X_0 +\mathbb E  \mathcal R_{t\wedge T}+\int_0^N v_1^+(s)ds.
\end{split}
\end{equation*}
Now let $t$ tend to $\infty$, yielding
\begin{equation*}
\mathbb{E}(\mathbf 1_{T\geq N}T)<\frac{2}{m-\bar v}\left(-X_0+\int_0^N v_1^+(s)ds+c \right)<\infty.
\end{equation*}
Thus, $\mathbb{E}(T)<N+\mathbb{E}(\mathbf 1_{T\geq N}T)<\infty$. From here, it is not hard to prove that $\mathbb{E}X_{T^{\uparrow}_{0}}<\infty$. Thus, $X_{t}$ is positive recurrent.

\subsection{The case $\bar{v}=m$}

Here we assume that $\mathcal R_t\equiv 0$.
Even in this case stronger assumptions need to be made. Define
\begin{equation*}
\begin{split}
v_{\sup}&=\sup\limits_{\substack{s}}v_1(s),\\
v_{\inf}&=\inf\limits_{\substack{s}}v_1(s),\\
\psi_{\sup}(x)&=m(x)-v_{\sup},\\
\psi_{\inf}(x)&=m(x)-v_{\inf},\\
\end{split}
\end{equation*}
We define two sets of assumptions:
\bigskip 
\newline
\textbf{\underline{Assumptions A}}
\begin{itemize}
\item $v_{\sup}<\infty$,
\item $\liminf\limits_{x\rightarrow -\infty}|x|\psi_{\sup}(x)>-\frac{V}{2}$.
\end{itemize}
\bigskip
\textbf{\underline{Assumptions B}}
\begin{itemize}
\item $v_{\inf}<\infty$,
\item $\limsup\limits_{x\rightarrow -\infty}|x|\psi_{\inf}(x)<-\frac{V}{2}$.
\end{itemize}
\bigskip
Under the set of assumptions A, we can prove that the process is
recurrent. We have, however, not been able to prove null recurrence in
the case of non-constant $v$.
\begin{proof}[Ideas of Proof]
Apply Lemma 3 to the process in equation~\eqref{eq:processv} with
$f(x)=\log|x|$, with $x<0$. Here $f''$ is decreasing. Hence, as long as
$X_{t}$ remains negative,
\begin{equation*}
\begin{split}
\log|X_{t}|&\leq\log|X_{0}|+\int_{0}^{t}\left(\frac{\psi_{\sup}(X_{s})}{X_{s}}-\frac{V(X_{s})}{2X_{s}^{2}}\right)ds+\int_{0}^{t}\frac{v_{\sup}-v_1(s)}{X_{s}}ds+\mathcal{M'}_{t}\\
&<\log|X_{0}|+\int_{0}^{t}\left(\frac{\psi_{\sup}(X_{s})}{X_{s}}-\frac{V(X_{s})}{2X_{s}^{2}}\right)ds+\mathcal{M'}_{t},\\
\end{split}
\end{equation*}
where $\mathcal{M'}$ is a martingale. 
Then we continue the proof as for the case of constant speed.
\end{proof}
Under the set of assumptions B and hypothesis~\eqref{eq:prop6}, we can
prove that 
\begin{equation*}
X_{t}\xrightarrow[t\rightarrow \infty]{}-\infty\quad\text{ and }\quad\frac{X_{t}}{t}\xrightarrow[t\rightarrow\infty]{}0.
\end{equation*} 
\begin{proof}[Ideas of Proof]
We take the same function $f$ we constructed in the case of constant
speed. We have $f'>0$, and
\begin{equation*}
\begin{split}
f(X_{t})&\leq f(X_{0})+\int_{0}^{t}\psi_{\inf}(X_{s})f'(X_{s})ds+\int_{0}^{t}(v_{\inf}-v_1(s))f'(X_{s})ds\\
&+\frac{1}{2}\int_{0}^{t}\int_{0}^{\infty}f''(X_{s}+\alpha)\alpha^{2}g(X_{s},\alpha)\nu(d\alpha)ds+\mathcal{N'}_{t}\\
&\leq
f(X_{0})+\int_{0}^{t}\psi_{\inf}(X_{s})f'(X_{s})ds+\frac{1}{2}\int_{0}^{t}\int_{0}^{\infty}f''(X_{s}+\alpha)\alpha^{2}g(X_{s},\alpha)\nu(d\alpha)ds+\mathcal{N'}_{t},\\
\end{split}
\end{equation*}
where $\mathcal{N'}$ is a martingale. Then we continue the proof as for
the case of constant speed.
\end{proof}

\paragraph{Acknowledgement} The authors want to thank Michael Kopp for formulating the biological question
which led us to this research, and for many stimulating discussions in the course of our work.

\newpage 
\bibliographystyle{IEEEtranN}
\bibliography{bibarticle} 
\end{document}